\newtheorem{theorem}{Theorem}
\newtheorem{proposition}[theorem]{Proposition}
\newtheorem{corollary}[theorem]{Corollary}
\newtheorem{example}[theorem]{Example}
\newtheorem{lemma}[theorem]{Lemma}
\newtheorem{remark}[theorem]{Remark}
\newenvironment{proof}[1][Proof]{\noindent\textbf{#1.} }{\ \rule{0.5em}{0.5em}}
\newcommand{\Vertic}{\textup{Vert}}
\tikzset{external/force remake}
\begin{document}
\begin{frontmatter}          
%
%
%
\title{On teaching sets of $k$-threshold functions}
%
%
\author[zam]{Elena Zamaraeva}
%
%
%
{\ead{elena.zamaraeva@gmail.com}}
\address{Nizhny Novgorod State University, Gagarin ave. 23, Nizhny Novgorod 603600, Russia}


%

\begin{abstract}
Let $f$ be a $\{0,1\}$-valued function over an integer $d$-dimensional cube $\{0,1,\dots,n-1\}^d$, for $n \geq 2$ and $d \geq 1$. 
The function $f$ is called threshold if there exists a hyperplane which separates $0$-valued points from $1$-valued points. Let $C$ be a class of functions and $f \in C$. 
A point $x$ is essential for the function $f$ with respect to $C$ if there exists a function $g \in C$ such that $x$ is a unique point on which $f$ differs from $g$. 
A set of points $X$ is called teaching for the function $f$ with respect to $C$ if no function in $C \setminus \{f\}$ agrees with $f$ on $X$. 
It is known that any threshold function has a unique minimal teaching set, which coincides with the set of its essential points.
In this paper we study teaching sets of $k$-threshold functions, 
i.e. functions that can be represented as a conjunction of $k$ threshold functions. 
We reveal a connection between essential points of $k$ threshold functions and essential
points of the corresponding $k$-threshold function. We note that, in general, 
a $k$-threshold function is not specified by its essential points 
and can have more than one minimal teaching set.
We show that for $d=2$ the number of minimal teaching sets for a 2-threshold function can grow as $\Omega(n^2)$. 
We also consider the class of polytopes with vertices in the $d$-dimensional cube. Each
polytope from this class can be defined by a $k$-threshold function for some $k$. 
In terms of $k$-threshold functions we prove that a polytope with vertices in the $d$-dimensional cube has a unique minimal teaching set which is equal to the set of its essential points. 
For $d=2$ we describe structure of the minimal teaching set of a polytope and show that cardinality
of this set is either $\Theta(n^2)$ or $O(n)$ and depends on the perimeter and the minimum angle of the polytope.
\end{abstract}
\begin{keyword}machine learning \sep threshold function \sep essential point \sep teaching set \sep learning complexity \sep $k$-threshold function
\end{keyword}
\end{frontmatter}

\section{Introduction}

Let $n$ and $d$ be integers such that $n \geq 2$ and $d \geq 1$ and let $E_n^d$ denote a $d$-dimensional cube $\{0, 1, \dots, n-1\}^d$.
A function $f$ that maps $E_n^d$ to $\{0,1\}$ is \emph{threshold}, 
if there exist real numbers $a_{0}, a_{1}, \ldots, a_{d}$ such that 
$$
		M_1(f)=
				\left\{ x \in E_n^d:\sum\limits_{j=1}^{d}{a_{j} x_{j}} \leq a_{0}\right\},
$$
where $M_\nu(f)$ is the set of points $x \in E_n^d$ for which $f(x)=\nu$.
The inequality
$\sum\limits_{j=1}^{d}{a_{j} x_{j}} \leq a_{0}$
is called \emph{threshold}. 
We denote by $\mathfrak{T}(d, n)$ the class of all threshold functions over $E_n^d$.

Let $k$ be a natural number. A function $f$ that maps $E_n^d$ to $\{0, 1\}$ is called \emph{$k$-threshold}
if there exist real numbers $a_{10}, a_{11}, \ldots, a_{kd}$ such that
\begin{equation}\label{eq:khalfspace1}
		M_1(f)=
				\left\{ x \in E_n^d:\sum\limits_{j=1}^{d}{a_{ij} x_{j}} \leq a_{i0}, \text{ для } i=1,\ldots,k \right\}.
\end{equation}
The system of inequalities 
$\sum\limits_{j=1}^{d}{a_{ij} x_{j}} \leq a_{i0}, \text{ для } i=1,\ldots,k$
is called \emph{threshold} and \emph{defines} the $k$-threshold function $f$.
Let $\mathfrak{T}(d, n, k)$ be the class of $k$-threshold functions over $E_n^d$. 
By definition $\mathfrak{T}(d, n, 1) = \mathfrak{T}(d, n)$. 
Note that a $k$-threshold function is also a $j$-threshold function for $j > k$. 
Denote by $\mathfrak{T}(d,n,*)$ the class of all $k$-threshold functions over $E_n^d$ for all natural $k$, that is $\mathfrak{T}(d,n,*) = \bigcup\limits_{k \geq 1}\mathfrak{T}(d,n,k)$.

For any $k$-threshold function $f$ there exist threshold functions $f_1, \dots, f_k$ such that
$$
	f(x) =f_1(x) \land \dots \land f_k(x),
$$
where "$\land$" denotes the usual logical conjunction.
We will say that $f$ is \emph{defined} by $f_1, \dots, f_k$ and $\{f_1, \dots, f_k\}$ is \emph{defining set} for $f$.


A convex hull of a set of points $X \subseteq \mathbb{R}^d$ is denoted by $\textup{Conv}(X)$.
For a function $f: E_n^d \rightarrow \{0,1\}$ we denote by $P(f)$ the convex hull of $M_1(f)$, that is $P(f) = \textup{Conv}(M_1(f))$. 
For any polytope $P$ with vertices in $E_n^d$ there exists a unique $k$-threshold function $f$, such that $P = P(f)$.
Therefore there is one-to-one correspondence between functions in the class $\mathfrak{T}(d,n,*)$
 and polytopes with vertices in $E_n^d$, and we can say that $\mathfrak{T}(d,n,*)$ is a \emph{class of polytopes with vertices in} $E_n^d$.

In \cite{Angluin} Angluin considered a model of concept learning with membership queries.
In this model a \emph{domain} $X$ and a \emph{concept class} 
$\mathcal{S} \subseteq 2 ^ X$ are known to both the \emph{learner} (or \emph{learning algorithm}) and the \emph{oracle}. 
The goal of the learner is to identify an unknown \emph{target concept} 
$S_T \in \mathcal{S}$ that has been fixed by the oracle. To this end, the learner may ask 
the oracle membership queries ``does an element $x$ belong to $S_T$?'', to which 
the oracle returns ``yes'' or ``no''. 
The learning complexity of a learning algorithm $\mathcal{A}$ with respect to a concept class $\mathcal{S}$ 
is the minimum number of membership queries sufficient for $\mathcal{A}$ to identify any concept in $\mathcal{S}$. 
The \emph{learning complexity of a concept class} $\mathcal{S}$ is defined as the minimum learning complexity of a
learning algorithm with respect to $\mathcal{S}$ over all learning algorithms which learn $\mathcal{S}$ using membership queries.


In terms of Angluin's model, a $\{0,1\}$-valued functions over $E_n^d$ can be considered as a characteristic functions of concepts. Here $E_n^d$ is the domain and a function $f: E_n^d \rightarrow \{0,1\}$ defines a concept $M_1(f)$.
Concept learning with membership queries for classes of threshold functions, 
$k$-threshold functions, and polytopes with vertices in $E_n^d$ corresponds to the problem of identifying 
geometric objects in $E_n^d$ with certain properties.  

From results of \cite{Hegedus} and \cite{Zolotykh} it follows that the learning complexity of the class of threshold functions $\mathfrak{T}(d,n)$ is $O\left(\frac{\log_2^{d-1}n}{\log\log_2n}\right)$.
In \cite{Maass} Maass and Bultman studied learning complexity of the class $k$-$HALFSPACES^2_{n,p}$, where $ 0 < p \leq \frac{\pi}{2}$.
The class $k$-$HALFSPACES^2_{n,p}$ is the subclass of $k$-threshold functions over $E_n^2$ with restrictions that for any $f$ in this 
subclass $P(f)$ has edges with length at least $16 \cdot \left\lceil \frac{1}{p} \right\rceil$ and an angle $\alpha$ between a pair of 
adjacent edges satisfies $p \leq \alpha \leq \pi - p$.
The learning algorithm proposed in \cite{Maass} for identification a function $f$ in $k$-$HALFSPACES^2_{n,p}$ requires a vertex of the polygon $P(f)$
as input and uses $O(k(\frac{1}{p} + \log n))$ membership queries.

Let $\mathcal{C}$ be a class of $\{0,1\}$-valued functions over the domain $X$ and $f\in \mathcal{C}$.
A \emph{teaching set} of a function $f$ with respect to $\mathcal{C}$ is a set of points $T\subseteq X$ such that
the only function in $\mathcal{C}$ which agrees with $f$ on $T$ is $f$ itself.
A teaching set $T$ is \emph{minimal} if no of its proper subset is teaching for $f$. 
Note that a teaching set of a function $f\in \mathfrak{T}(d,n,k)$ with 
respect to $\mathfrak{T}(d,n,*)$ is a teaching set with respect to $\mathfrak{T}(d,n,k)$.
A point $x \in X$ is called \emph{essential} for a function $f\in \mathcal{C}$ with respect to $\mathcal{C}$ 
if there exists a function $g \in \mathcal{C}$
such that $f(x) \neq g(x)$ and $f$ agrees with $g$ on $X \setminus \{x\}$.
Let us denote the set of essential points of a function $f$ with respect to a class $\mathcal{C}$ by $S(f,\mathcal{C})$ 
or by $S(f)$ when $\mathcal{C}$ is clear.
Let $S_{\nu}(f)=S(f)\cap M_{\nu}(f)$.
By $J(f,C)$ we denote the number of minimal teaching sets of a function $f$ with respect to a class $C$
and by $\sigma(f, \mathcal{C})$ the minimum cardinality of a teaching set of $f$ with respect to $\mathcal{C}$. 
The \emph{teaching dimension} of a class $\mathcal{C}$ is defined as
$$
\sigma(\mathcal{C}) = \max_{f \in \mathcal{C}} \sigma(f, \mathcal{C}).
$$

The main aim of a learning algorithm with membership queries is to find any teaching 
set of a target function $f$ with respect to a concept class $\mathcal{C}$.
The algorithm succeeds if it asked queries in all points of some teaching set of the function.
Therefore the teaching dimension of the class $\mathcal{C}$ is a lower bound on the learning complexity of this class.

It is known (see, for example, \cite{Brightwell} and \cite{Shevchenko}), that the set of essential points of a threshold function is a teaching set of this function. 
Together with the simple observation that any teaching set of a function should contains all its essential points, this imply that any threshold function have a unique minimal teaching set,
that is $J(f, \mathfrak{T}(d,n))=1$.
In addition, it follows from \cite{Zolotykh, Shevchenko, Shevchenko2, Shevchenko3} that for any fixed $d \geq 2$ 
$$
\sigma(\mathfrak{T}(d, n)) = \Theta(\log_2^{d-2}n) \quad (n \rightarrow \infty).
$$

In this paper we study combinatorial and structural properties of teaching sets of $k$-threshold functions for $k \geq 2$. 
In particular, we show that $2$-threshold functions
from $\mathfrak{T}(2,n,2)$, in contrast with threshold functions, can have more than one minimal teaching set with respect to $\mathfrak{T}(2,n,2)$. 
Moreover, we construct a sequence of functions from $\mathfrak{T}(2,n,2)$ for which number of minimal teaching sets grows as $\Omega(n^2)$. 
On the other hand, we show that any $k$-threshold function $f$ (or a polytope with vertices in $E_n^d$) has a unique minimal teaching set with respect 
to $\mathfrak{T}(d,n,*)$ coinciding with the set of essential points of $f$ with respect to $\mathfrak{T}(d,n,*)$. 
In addition, we give a general description of minimal teaching sets of such functions.
For functions in $\mathfrak{T}(2,n,*)$ we refine the given structure and derive a bound on the cardinality of the minimal teaching sets.

The organization of the paper is as follows. 
In Section 2, we consider essential points of an $k$-fold conjunction of an arbitrary $\{0,1\}$-valued functions $f_1,\dots,f_k$ and their connection with essential points of these functions. 
In the beginning of Section 3 we show that in general a $k$-threshold function can have more than one minimal teaching set. 
The main result of Subsection 3.1 (Theorem \ref{th:S_f}) states that a minimal teaching set of a $k$-threshold function with respect to $\mathfrak{T}(d,n,*)$ is unique and coincides with $S(f, \mathfrak{T}(d,n,*))$. 
The structure of $S(f,\mathfrak{T}(d,n,*))$ is given as well. 
In Subsection 3.2 we consider the class $\mathfrak{T}(2,n,*)$ and for a function $f$ in the class we prove an upper bound on the cardinality of $S(f, \mathfrak{T}(2,n,*))$. 
Finally, in Subsection 3.3 we consider functions in $\mathfrak{T}(2,n,2)$ with special properties and show that each of these functions has a minimal teaching set with cardinality at most 9 and there are functions with $\Omega(n^2)$ minimal teaching sets with respect to $\mathfrak{T}(2,n,2)$.

\section{The set of essential points of a $\{0,1\}$-valued functions conjunction}

Since a $k$-threshold function is a conjunction of $k$ threshold functions, 
it is interesting to investigate connection between essential points of threshold functions $f_1,\dots,f_k$ and essential points of their conjunction.
In this section we prove several propositions that establish this relationship. 
For a natural $k > 1$ and a class $\mathcal{C}$ of $\{0,1\}$-valued functions we denote by $\mathcal{C}^k$ the class of functions which can be presented as conjunction of $k$ functions from $\mathcal{C}$.
\begin{proposition}
\label{prop:statem1}
	Let $\mathcal{C}$ be a class of $\{0,1\}$-valued functions over  a domain $X$ and $f_1, \ldots, f_k \in \mathcal{C}$. Then for the function $f = f_1 \land \dots \land f_k$ the following inclusions hold:
	$$
		S_1(f_i, \mathcal{C}) \cap M_1(f) \subseteq S_1(f,\mathcal{C}^k)\qquad (i = 1, \ldots, k).
	$$
\end{proposition}
\begin{proof}
	Let $x \in S_1(f_i, \mathcal{C}) \cap M_1(f)$ for some $i \in \{1, \ldots, k\}$. Since $x$ is an
	essential point of $f_i$ and $f_i(x)=1$, there exists a function $f_i' \in \mathcal{C}$ which differs from $f_i$ in the unique point $x$. Denote by $f'$ the conjunction $f_1 \land \ldots \land f_{i-1} \land f_i' \land f_{i+1} \land \ldots \land f_k$. The function $f'$ belongs to the class $\mathcal{C}^k$ and differs from $f$ in the unique point $x$, namely
	$f'(x) = 0 \neq f(x)$. It means that $x$ is essential for $f$, i.e. $x \in S_1(f, \mathcal{C}^k)$.
\end{proof}

\begin{proposition}
\label{prop:statem2}
	Let $\mathcal{C}$ be a class of $\{0,1\}$-valued functions over a domain $X$ and $f_1, \ldots, f_k \in \mathcal{C}$. Then for the function $f = f_1 \land \dots \land f_k$ the following inclusions hold:
	$$
		S_0(f_i, \mathcal{C}) \cap \bigcap\limits_{j \neq i} M_1(f_j) \subseteq S_0(f, \mathcal{C}^k)\qquad (i = 1, \ldots, k).
	$$
\end{proposition}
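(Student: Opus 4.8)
The plan is to follow the same strategy as in the proof of Proposition \ref{prop:statem1}, with the roles of the values $0$ and $1$ interchanged. Fix an index $i \in \{1, \dots, k\}$ and a point $x \in S_0(f_i, \mathcal{C}) \cap \bigcap_{j \neq i} M_1(f_j)$. Since $f_i(x) = 0$, the conjunction gives $f(x) = 0$, so $x \in M_0(f)$; it then remains only to exhibit a function in $\mathcal{C}^k$ that differs from $f$ in exactly the point $x$.

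Because $x$ is essential for $f_i$ and $f_i(x) = 0$, there is a function $f_i' \in \mathcal{C}$ with $f_i'(x) = 1$ and $f_i'(y) = f_i(y)$ for all $y \neq x$. Set $f' = f_1 \land \dots \land f_{i-1} \land f_i' \land f_{i+1} \land \dots \land f_k$, which lies in $\mathcal{C}^k$. For every $y \neq x$ we have $f_i'(y) = f_i(y)$, hence $f'(y) = f(y)$. At the point $x$ we have $f_j(x) = 1$ for all $j \neq i$ by the assumption $x \in \bigcap_{j \neq i} M_1(f_j)$, and $f_i'(x) = 1$ by construction, so $f'(x) = 1 \neq 0 = f(x)$. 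Thus $f'$ differs from $f$ in the unique point $x$, which means $x$ is essential for $f$ with $f(x) = 0$, i.e. $x \in S_0(f, \mathcal{C}^k)$.

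There is no genuine obstacle here; the only point worth stressing is why the extra hypothesis $x \in \bigcap_{j \neq i} M_1(f_j)$ is needed, whereas Proposition \ref{prop:statem1} requires only $x \in M_1(f)$. In Proposition \ref{prop:statem1} we flip $f_i$ from $1$ to $0$ at $x$, and since any conjunction containing a $0$ evaluates to $0$, this change always propagates to the conjunction regardless of the values of the other factors at $x$. Here we flip $f_i$ from $0$ to $1$ at $x$, and this change propagates to $f$ only when no other factor already forces the conjunction to $0$ at $x$ — which is exactly the requirement that $f_j(x) = 1$ for every $j \neq i$. I would therefore make sure to point this asymmetry out explicitly when writing the proof.
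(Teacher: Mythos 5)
Your proof is correct and follows essentially the same argument as the paper: replace $f_i$ by the witness $f_i'$ from the essentiality of $x$ for $f_i$, and use $x \in \bigcap_{j \neq i} M_1(f_j)$ to see that the flip at $x$ propagates to the conjunction. Your closing remark on the asymmetry with Proposition~\ref{prop:statem1} is a nice observation but not needed for the proof itself.
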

\begin{proof}
	Let $x \in S_0(f_i, \mathcal{C}) \cap \bigcap\limits_{j \neq i} M_1(f_j)$ for some $i \in \{1, \ldots, k\}$.
	Since $x \in S_0(f_i)$, there exists a function $f_i' \in \mathcal{C}$ such that
	$f_i'(x) = 1$ and $f_i'(y)=f_i(y)$ for every $y \in X \setminus \{x\}$. Denote by $f'$ the conjunction $f_1 \land \ldots \land f_{i-1} \land f_i' \land f_{i+1} \land \ldots \land f_k$. The function $f'$ belongs to the class $\mathcal{C}^k$ and, since
	$x \in \bigcap\limits_{j \neq i} M_1(f_j)$, it differs from $f$ in the unique point $x$, namely $f'(x) = 1 \neq f(x)$. Therefore
	$x$ is essential for $f$ and $x \in S_0(f, \mathcal{C}^k)$. 
\end{proof}

\begin{proposition}
\label{prop:statem3}
	Let $\mathcal{C}$ be a class of $\{0,1\}$-valued functions over a domain $X$ and $f \in \mathcal{C}^k$. If there exists a unique set $f_1, \ldots, f_k \in \mathcal{C}$ such that $f = f_1 \land \dots \land f_k$, then
    $$
    S(f_i, \mathcal{C}) \subseteq \bigcap\limits_{j \neq i}M_1(f_j)\qquad (i = 1, \ldots, k).
    $$
\end{proposition}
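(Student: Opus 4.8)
The plan is to argue by contradiction, using the essential point of $f_i$ to produce a second defining set of $f$, which then contradicts uniqueness. Fix $i$ and suppose $x \in S(f_i,\mathcal{C})$ but $x \notin \bigcap_{j\neq i} M_1(f_j)$, i.e.\ $f_j(x)=0$ for some index $j\neq i$. Since $x$ is essential for $f_i$ with respect to $\mathcal{C}$, there is a function $f_i' \in \mathcal{C}$ with $f_i'(x)\neq f_i(x)$ and $f_i'(y)=f_i(y)$ for all $y \in X\setminus\{x\}$. Form the conjunction $f' := f_1 \land \dots \land f_{i-1} \land f_i' \land f_{i+1} \land \dots \land f_k \in \mathcal{C}^k$.

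The next step is to show $f' = f$. For every $y \neq x$ this is immediate, because $f_i'$ and $f_i$ agree at $y$ while all other factors are unchanged. At the point $x$ itself I would split into the two cases according to the value $f_i(x)$. If $f_i(x)=1$, then $f_i'(x)=0$, so $f'(x)=0$; on the other hand $f(x)=0$ as well, since the factor $f_j$ with $f_j(x)=0$ already forces $f(x)=0$. If instead $f_i(x)=0$, then $f(x)=0$ because $f_i$ is a factor of $f$, and $f'(x)=0$ because the factor $f_j$ (which is retained in $f'$, as $j\neq i$) has $f_j(x)=0$. In both cases $f'(x)=0=f(x)$, so $f'=f$ on all of $X$.

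Thus $\{f_1,\dots,f_{i-1},f_i',f_{i+1},\dots,f_k\}$ is a defining set for $f$, and it differs from $\{f_1,\dots,f_k\}$ since $f_i' \neq f_i$ while the remaining $k-1$ functions are the same ones listed in the original defining set. This contradicts the hypothesis that $f$ has a unique defining set in $\mathcal{C}$. Hence no such index $j$ exists, which means $f_j(x)=1$ for every $j\neq i$, i.e.\ $x \in \bigcap_{j\neq i} M_1(f_j)$. Since $x$ was an arbitrary point of $S(f_i,\mathcal{C})$, the claimed inclusion follows for each $i=1,\dots,k$.

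The routine part is the case check that $f'=f$ at $x$; the one point deserving a little care is the final sentence of the second-to-last paragraph, namely that substituting $f_i'$ for $f_i$ genuinely yields a \emph{different} defining set and not merely a different listing of the same set — this is where one uses that the members of a defining set of $k$ functions are being compared as the actual functions $f_1,\dots,f_k$ (so that changing the $i$-th one to something unequal to $f_i$ produces a distinct collection). I expect this bookkeeping, rather than any genuine mathematical difficulty, to be the main thing to get right.
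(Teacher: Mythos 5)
Your argument is correct and follows essentially the same route as the paper's proof: assume some $f_j(x)=0$ for $j\neq i$, use essentiality of $x$ for $f_i$ to replace $f_i$ by $f_i'$, observe the conjunction is unchanged, and contradict uniqueness of the defining set. The paper simply asserts the equality $f_1\land\dots\land f_i'\land\dots\land f_k=f$ as ``clear,'' whereas you spell out the case check at $x$; the content is the same.
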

\begin{proof}
	Suppose to the contrary that there exists $x \in X$ such that
    $x \in S(f_i, \mathcal{C})$ and $f_j(x) = 0$ for some distinct indices $i,j \in \{ 1, \ldots, k \}$.
    It means that $f(x)=0$.
    Since $x$ is essential for $f_i$, there exists a function $f_i' \in \mathcal{C}$ which differs from $f_i$ in the unique point $x$. 
    Clearly, $f_1 \land \ldots \land f_{i-1} \land f_i' \land f_{i+1} \land \ldots \land f_k = f$, which contradicts the uniqueness of the set $\{f_1,\dots,f_k\}$.
\end{proof}

\begin{corollary}\label{cor:col1}
	Let $\mathcal{C}$ be a class of $\{0,1\}$-valued functions over a domain $X$ and $f \in \mathcal{C}^k$. 
	If there exists a unique set $f_1, \ldots, f_k \in \mathcal{C}$ such that $f = f_1 \land \dots \land f_k$ then 
    $$
    \bigcup\limits_{i=1}^{k} S_\nu(f_i, \mathcal{C}) \subseteq S_\nu(f, \mathcal{C}^k)\qquad (\nu = 0, 1).
    $$
\end{corollary}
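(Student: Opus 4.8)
The corollary follows by combining the three preceding propositions. The key observation is that under the uniqueness hypothesis, Proposition \ref{prop:statem3} tells us that every essential point of $f_i$ (of either value) lies in $\bigcap_{j \neq i} M_1(f_j)$. This is exactly the side condition needed to invoke Propositions \ref{prop:statem1} and \ref{prop:statem2}.

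More precisely, I would argue as follows. Fix $i \in \{1,\dots,k\}$ and let $x \in S_\nu(f_i, \mathcal{C})$. By Proposition \ref{prop:statem3}, $x \in \bigcap_{j \neq i} M_1(f_j)$. Now split into the two cases on $\nu$. If $\nu = 1$, then $f_i(x) = 1$ and $f_j(x) = 1$ for all $j \neq i$, so $f(x) = 1$, i.e., $x \in M_1(f)$; hence $x \in S_1(f_i, \mathcal{C}) \cap M_1(f)$, and Proposition \ref{prop:statem1} gives $x \in S_1(f, \mathcal{C}^k)$. If $\nu = 0$, then $x \in S_0(f_i, \mathcal{C})$ and $x \in \bigcap_{j \neq i} M_1(f_j)$, so $x \in S_0(f_i, \mathcal{C}) \cap \bigcap_{j \neq i} M_1(f_j)$, and Proposition \ref{prop:statem2} gives $x \in S_0(f, \mathcal{C}^k)$. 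In either case $x \in S_\nu(f, \mathcal{C}^k)$. Since $i$ and $x$ were arbitrary, taking the union over $i$ yields the claimed inclusion.

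I do not anticipate a genuine obstacle here, since the work has essentially been done in the propositions; the only thing to be careful about is bookkeeping of which value $\nu$ triggers which proposition and verifying that the hypothesis $x \in \bigcap_{j\neq i} M_1(f_j)$ from Proposition \ref{prop:statem3} is precisely what lets us conclude $x \in M_1(f)$ in the $\nu = 1$ case (so that the intersection with $M_1(f)$ in Proposition \ref{prop:statem1} is automatically satisfied). No new constructions or estimates are required.
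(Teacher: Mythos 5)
Your proposal is correct and follows essentially the same route as the paper: invoke Proposition \ref{prop:statem3} to place each essential point of $f_i$ in $\bigcap_{j \neq i} M_1(f_j)$ (hence in $M_1(f)$ when $\nu = 1$), and then apply Propositions \ref{prop:statem1} and \ref{prop:statem2}. Your write-up is merely a more explicit, pointwise version of the paper's argument.
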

\begin{proof}
	Since the function $f$ satisfies the conditions of Proposition \ref{prop:statem3},
	$$
		S_1(f_i, \mathcal{C}) \subseteq M_1(f)\qquad (i = 1, \ldots, k)
	$$
	and
	$$
		S_0(f_i, \mathcal{C}) \subseteq \bigcap_{j \neq i} M_1(f_j)\qquad (i = 1, \ldots, k).
	$$
	By Propositions \ref{prop:statem1} and \ref{prop:statem2} we get
	$$
	\bigcup_{i=1}^k{S_\nu(f_i, \mathcal{C})} \subseteq S_\nu(f, \mathcal{C}^k).
	$$
\end{proof}
\section{Teaching sets of $k$-threshold functions}
Recall that the minimal teaching set of a threshold function is unique and equal to the set of its essential points. 
The situation becomes different for $k$-threshold functions when $k \geq 2$. 
We illustrate this difference in the following example.

\begin{example} Let $f$ be a function from $\mathfrak{T}(2, 4, 2)$ with
$$
M_1(f)=\{(1,2), (1,3), (2,2), (2,3)\}.
$$
The set of essential points $S(f)$ is
$$
	\{(1,1), (1,2), (2,1), (2,2), (0,3), (3,3)\}.
$$
This set is not a teaching set because there exists a function $g \in \mathfrak{T}(2,4,2)$ with $M_1(g)=\{(1,2), (2,2)\}$, which agrees with $f$ on $S(f)$ (see Fig. \ref{fig:2-threshold}).
Though if we add any of the two points $(1,3)$ or $(2,3)$ to $S(f)$, then we get a minimal teaching set of the function $f$ (see Fig. \ref{fig:2-threshold1}) with respect to 
$\mathfrak{T}(2, 4, 2)$, and therefore $J(f,\mathfrak{T}(2,4,2)) \geq 2$.
\end{example}

\begin{figure}[p]
    \centering
    \includegraphics[width=0.48\textwidth]{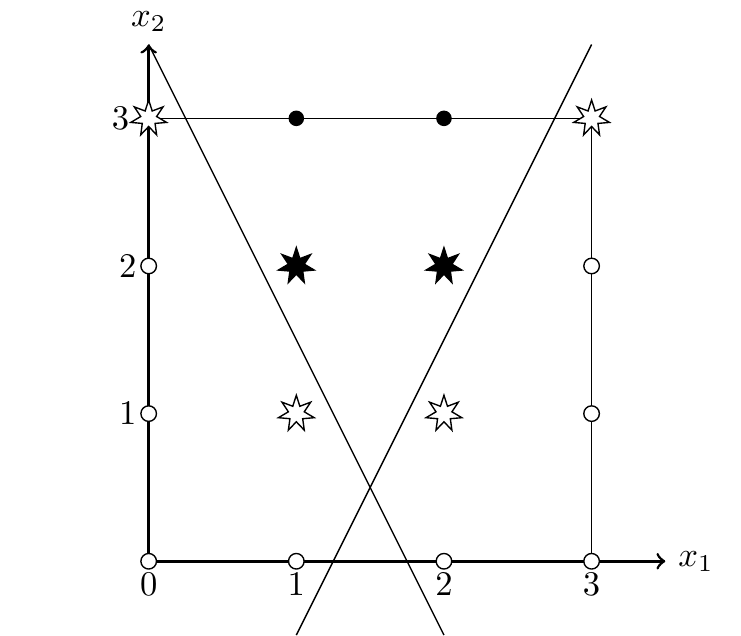}
   ~
    \includegraphics[width=0.48\textwidth]{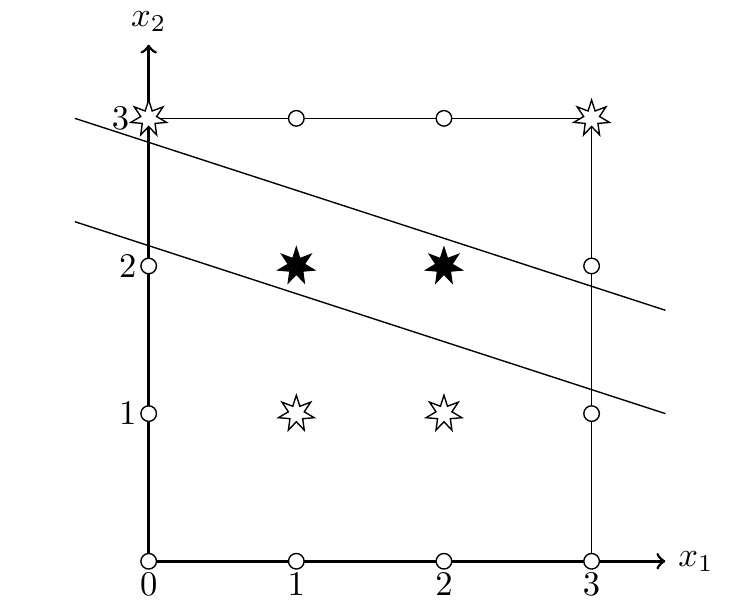}
    \caption{The stars denote the essential points. 
	The black elements denote the points from $M_1(f)$. 
	The empty elements denote the points from $M_0(f)$. 
	The functions $f$ (left plot) and $g$ (right plot) agree on $S(f)$.}
    \label{fig:2-threshold}
\end{figure}

\begin{figure}[p]
    \centering
    \includegraphics[width=0.48\textwidth]{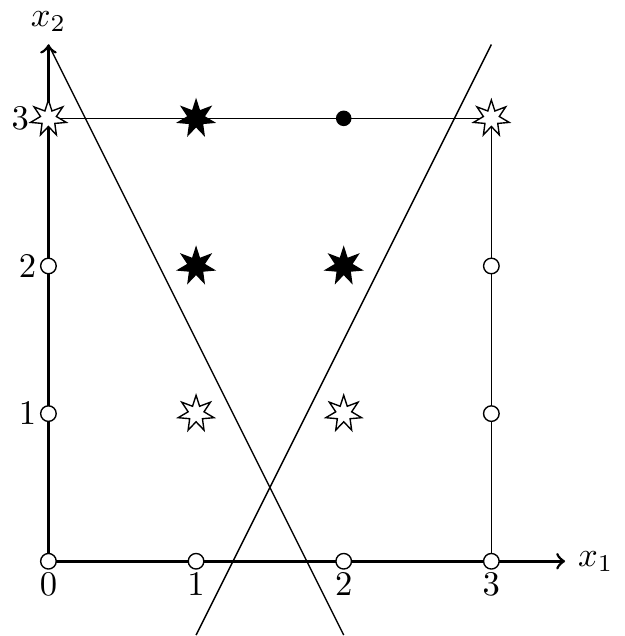}
   ~
    \includegraphics[width=0.48\textwidth]{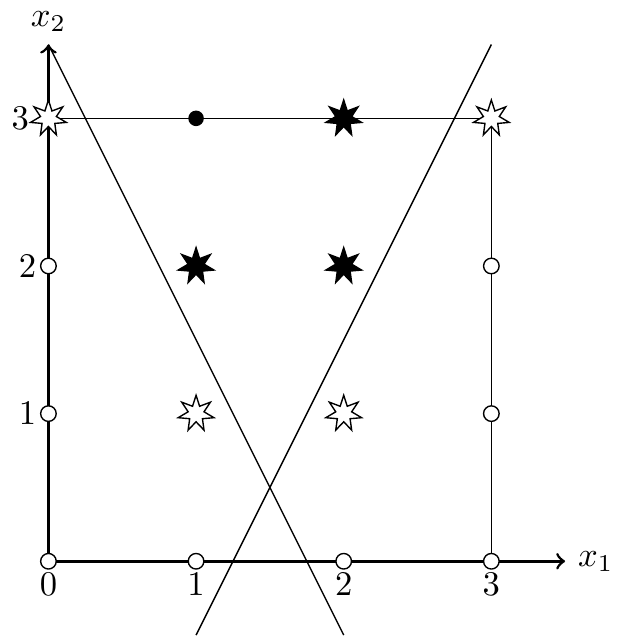}
    \caption{The stars denote the points of the minimal teaching sets $S(f) \cup \{1,3\}$ (left plot) and $S(f) \cup \{2,3\}$ (right plot).}
    \label{fig:2-threshold1}
\end{figure}

\subsection{Teaching sets for functions in $\mathfrak{T}(d,n,*)$}

In this section we prove that for $k \geq 2$ and $d \geq 2$ the teaching dimension of 
$\mathfrak{T}(d,n,k)$ is $n^d$.
Then we consider the class $\mathfrak{T}(d,n,*)$ and show that
for a function $f \in \mathfrak{T}(d,n,*)$ the set of its essential points with respect to $\mathfrak{T}(d,n,*)$ is also a teaching set, and therefore it is a unique minimal teaching set of $f$ with respect to $\mathfrak{T}(d,n,*)$.

\begin{lemma}
\label{lem:statem-line}
	Let $f:~E_n^d\to\{0,1\}$ be a function such that $1 \leq |\textup{Vert}(P(f))| \leq 2$  and $P(f) \cap M_0(f) = \emptyset$. 
	Then $f \in \mathfrak{T}(d,n,k)$ for any $k \geq 2$.
\end{lemma}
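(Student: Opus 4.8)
The plan is to prove $f\in\mathfrak{T}(d,n,2)$; the case of general $k\ge 2$ then follows at once by padding the defining system with redundant inequalities, e.g.\ repeating one of the two or adjoining $\sum_{j=1}^{d}0\cdot x_j\le 0$ (satisfied by every point of $E_n^d$, hence a threshold inequality). The subcase $|\textup{Vert}(P(f))|=1$ is immediate: then $M_1(f)=\{p\}$ for some $p\in E_n^d$, and for any $\alpha\in\mathbb{R}^d$ whose values $\alpha\cdot x$ are pairwise distinct over $E_n^d$ (say $\alpha=(1,n,\dots,n^{d-1})$) the two halfspaces $\alpha\cdot x\le\alpha\cdot p$ and $-\alpha\cdot x\le-\alpha\cdot p$ meet $E_n^d$ exactly in $\{p\}$. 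So assume $|\textup{Vert}(P(f))|=2$, and also $d\ge 2$ (for $d=1$, $M_1(f)$ is a block of consecutive integers $\{p,\dots,q\}$, cut out by $x\le q$ and $x\ge p$).

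First I would pin down $M_1(f)$. The two vertices $p,q$ of $P(f)=\textup{Conv}(M_1(f))$ lie in $M_1(f)\subseteq E_n^d$; every grid point of the segment $[p,q]$ lies in $P(f)$, hence outside $M_0(f)$ by hypothesis, hence in $M_1(f)$; together with $M_1(f)\subseteq P(f)\cap E_n^d$ this gives $M_1(f)=[p,q]\cap E_n^d$. Write $q-p=g\,u$ with $u$ a primitive integer vector and $g\ge 1$, and set $\ell=p+\mathbb{R}u$; then the grid points of $\ell$ are exactly $\{p+tu:t\in\mathbb{Z}\}$ and $M_1(f)=\{p+tu:0\le t\le g\}$.

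The core step is to construct a linear functional $\alpha$ such that the slab $\{x:\alpha\cdot p\le\alpha\cdot x\le\alpha\cdot q\}$ meets $E_n^d$ precisely in $M_1(f)$; then $f$ is defined by the two threshold inequalities $\alpha\cdot x\le\alpha\cdot q$ and $-\alpha\cdot x\le-\alpha\cdot p$. I would first pick a nonzero $\alpha_0\perp u$ so that the hyperplane $H=\{x:\alpha_0\cdot x=\alpha_0\cdot p\}$, which automatically contains $\ell$, contains no grid point outside $\ell$: for a grid point $y\notin\ell$ the vector $y-p$ is not parallel to $u$, so the ``forbidden'' directions $\{\alpha_0\in u^\perp:\alpha_0\cdot(y-p)=0\}=u^\perp\cap(y-p)^\perp$ form a proper subspace of the $(d-1)$-dimensional space $u^\perp$, and a real vector space is not a finite union of proper subspaces, so a good $\alpha_0$ exists. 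Then set $\alpha=\alpha_0+\varepsilon u$ for a small $\varepsilon>0$. On $\ell$ one has $\alpha\cdot(p+tu)=\alpha\cdot p+t\varepsilon|u|^2$, which lies in $[\alpha\cdot p,\alpha\cdot q]$ iff $0\le t\le g$, so the slab meets $\ell$ exactly in $M_1(f)$. For a grid point $x\notin\ell$: if $\alpha_0\cdot(x-p)=0$ then $x\in H$, contradicting the choice of $\alpha_0$; if $\alpha_0\cdot(x-p)\ne 0$ then, since $\alpha\cdot(x-p)\to\alpha_0\cdot(x-p)$ and $g\varepsilon|u|^2\to 0$ as $\varepsilon\to 0$, for small $\varepsilon$ the value $\alpha\cdot(x-p)$ avoids the interval $[0,g\varepsilon|u|^2]$, so $x$ lies outside the slab. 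As $E_n^d$ is finite, a single $\varepsilon>0$ works for all such $x$ simultaneously, which finishes the argument.

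I expect the only real obstacle to be this core step for $d\ge 3$: in the plane the hyperplane through $p$ containing $\ell$ is $\ell$ itself, but in higher dimensions it is a genuine hyperplane possibly carrying many extraneous grid points, and one must know that its normal can be chosen to miss all of them --- precisely the ``not a finite union of proper subspaces'' observation. The remaining ingredients (the reduction to $k=2$, the identification of $M_1(f)$, and the limiting argument in $\varepsilon$) are routine.
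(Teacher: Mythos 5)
Your proof is correct and follows the same route as the paper: cut out $M_1(f)$ as the grid points of a thin slab between two parallel hyperplanes through the two vertices of $P(f)$, then pad with redundant inequalities for $k>2$. The paper merely asserts that such a pair of hyperplanes exists (``clearly, it is possible to choose\dots''), whereas you supply the explicit construction via $\alpha=\alpha_0+\varepsilon u$ and the finite-union-of-proper-subspaces argument, so your write-up is a fully detailed version of the paper's sketch.
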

\begin{proof}
	It is sufficient to show that $f$ is a $2$-threshold function. 
	Let $x$ and $y$ be the two vertices of $P(f)$. Note that if $|M_1(f)| = 1$, then
	$x=y$. 
	
	Clearly, it is possible to choose two parallel hyperplanes $H'$ and $H''$ sufficiently
	close to each other such that $E_n^d \cap H' = \{x\}$, $E_n^d \cap H'' = \{y\}$, 
	and there are no points between $H'$ and $H''$ in $E_n^d \setminus M_1(f)$. These
	hyperplanes can be used to define a 2-threshold function, that coincides with $f$.
\end{proof}

In \cite{Angluin} it was established that the teaching dimension of a class containing the empty set and $N$ singleton sets is at least $N$. 
This result and Lemma \ref{lem:statem-line} give us the teaching dimension for $\mathfrak{T}(d,n,k)$, where $k \geq 2$:
\begin{corollary}
	$\sigma(\mathfrak{T}(d,n,k)) = n^d$ for every $k \geq 2$.
\end{corollary}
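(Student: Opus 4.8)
The plan is to establish the two bounds $\sigma(\mathfrak{T}(d,n,k)) \le n^d$ and $\sigma(\mathfrak{T}(d,n,k)) \ge n^d$ separately, the first being immediate and the second following from the cited result of Angluin once we exhibit the right subfamily of $\mathfrak{T}(d,n,k)$.

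For the upper bound, note that for any function $f$ over $E_n^d$ the whole domain $E_n^d$ is trivially a teaching set, and $|E_n^d| = n^d$; hence $\sigma(f,\mathfrak{T}(d,n,k)) \le n^d$ for every $f \in \mathfrak{T}(d,n,k)$, so $\sigma(\mathfrak{T}(d,n,k)) \le n^d$.

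For the lower bound, I would check that $\mathfrak{T}(d,n,k)$ contains the empty concept together with all $n^d$ singletons. The identically-zero function $f_0$ (with $M_1(f_0) = \emptyset$) already lies in $\mathfrak{T}(d,n,1) \subseteq \mathfrak{T}(d,n,k)$, since for instance the single inequality $x_1 \le -1$ has no solution in $E_n^d$ and hence defines $f_0$. For each point $x \in E_n^d$, the function $f_x$ with $M_1(f_x) = \{x\}$ satisfies the hypotheses of Lemma \ref{lem:statem-line}: $P(f_x) = \{x\}$ has exactly one vertex, and $P(f_x) \cap M_0(f_x) = \{x\} \cap (E_n^d \setminus \{x\}) = \emptyset$; therefore $f_x \in \mathfrak{T}(d,n,k)$ for every $k \ge 2$. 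Thus the class contains the empty set and the $n^d$ singleton subsets of $E_n^d$, and by the quoted bound of \cite{Angluin} its teaching dimension is at least $n^d$ (concretely, any teaching set of $f_0$ must contain $x$ in order to separate $f_0$ from $f_x$, so $\sigma(f_0,\mathfrak{T}(d,n,k)) \ge n^d$). Combining with the upper bound yields $\sigma(\mathfrak{T}(d,n,k)) = n^d$.

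There is no genuine obstacle here once Lemma \ref{lem:statem-line} and Angluin's lower bound are available; the only points demanding a little care are verifying that $f_0$ and each $f_x$ really belong to $\mathfrak{T}(d,n,k)$ and invoking the quoted bound with the correct value $N = n^d$.
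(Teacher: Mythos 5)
Your proposal is correct and follows essentially the same route as the paper: the paper derives the corollary from Angluin's lower bound for classes containing the empty concept and $N$ singletons, together with Lemma \ref{lem:statem-line} which places every singleton (and trivially the empty concept) in $\mathfrak{T}(d,n,k)$ for $k \geq 2$. You have merely filled in the routine verifications that the paper leaves implicit.
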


For a polytope $P$ denote by $\textup{Vert}(P)$ the set of vertices of $P$, by $B(P)$ the set of integer points on the border of $P$ and by $\textup{Int}(P)$ the set of internal integer points of $P$.
For $f \in \mathfrak{T}(d,n,*)$ denote by $D(f)$ the set $\{x \in M_0(f): \textup{Conv}(P(f) \cup \{x\}) \cap M_0(f) = \{x\} \}$.

\begin{theorem}
\label{th:S_f}
Let $f \in \mathfrak{T} (d,n,*)$, $d \geq 2$, $n \geq 2$. Then 
$$
S(f, \mathfrak{T} (d,n,*)) = 
\begin{cases}
	E_n^d,& M_1(f) = \emptyset; \\
	\textup{Vert}(P(f)) \cup D(f),& M_1(f) \neq \emptyset;
\end{cases}
$$ 
and $S(f, \mathfrak{T} (d,n,*))$ is a unique minimal teaching set of $f$.

\end{theorem}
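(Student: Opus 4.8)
The plan is to dispose of the trivial case $M_1(f)=\emptyset$ separately (then $f\equiv 0$, every singleton indicator lies in $\mathfrak T(d,n,*)$, so every point of $E_n^d$ is essential, and $E_n^d$ is plainly the only teaching set) and to assume $M_1(f)\neq\emptyset$ in the rest. I will use two elementary facts: for every $g\in\mathfrak T(d,n,*)$ we have $M_1(g)=P(g)\cap E_n^d$ (because $M_1(g)$ is the integer point set of a polyhedron $Q$, so $P(g)=\textup{Conv}(M_1(g))\subseteq Q$ and hence $P(g)\cap E_n^d\subseteq Q\cap E_n^d=M_1(g)$); and conversely, for any polytope $P$ with vertices in $E_n^d$, the function whose $1$-set is $P\cap E_n^d$ lies in $\mathfrak T(d,n,*)$ (take the facet inequalities of $P$). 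In particular a function of $\mathfrak T(d,n,*)$ is determined by its $1$-set.

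First I would prove the formula for $S(f)$. For $\textup{Vert}(P(f))\cup D(f)\subseteq S(f)$: if $v$ is a vertex of $P(f)$ then $v\notin\textup{Conv}(M_1(f)\setminus\{v\})$, so $M_1(f)\setminus\{v\}=\textup{Conv}(M_1(f)\setminus\{v\})\cap E_n^d$ and the function with this $1$-set witnesses that $v$ is essential; if $x\in D(f)$ then by definition $M_1(f)\cup\{x\}=\textup{Conv}(P(f)\cup\{x\})\cap E_n^d$, and the function with this $1$-set witnesses that $x$ is essential. For the reverse inclusion, let $x$ be essential, witnessed by $g$. If $f(x)=1$ then $M_1(g)=M_1(f)\setminus\{x\}$; were $x$ not a vertex of $P(f)$ it would lie in $\textup{Conv}(M_1(f)\setminus\{x\})=P(g)$, hence in $M_1(g)$, a contradiction. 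If $f(x)=0$ then $M_1(g)=M_1(f)\cup\{x\}$, so $\textup{Conv}(P(f)\cup\{x\})\cap M_0(f)=P(g)\cap M_0(f)=M_1(g)\cap M_0(f)=\{x\}$, i.e.\ $x\in D(f)$.

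The main work is to show that $T:=\textup{Vert}(P(f))\cup D(f)$ is a teaching set; uniqueness and minimality then follow immediately from the standard observation that every teaching set contains every essential point. So let $g\in\mathfrak T(d,n,*)$ agree with $f$ on $T$. Since $f$, and hence $g$, equals $1$ on $\textup{Vert}(P(f))$, we get $P(f)\subseteq P(g)$, hence $M_1(f)\subseteq M_1(g)$. For the reverse inclusion I would establish the key lemma: \emph{for every $y\in M_0(f)$ the polytope $\textup{Conv}(P(f)\cup\{y\})$ contains a point of $D(f)$}. Given the lemma, if some $y\in M_1(g)\setminus M_1(f)\subseteq M_0(f)$ existed, it would supply $z\in D(f)\cap\textup{Conv}(P(f)\cup\{y\})\subseteq P(g)$, so $g(z)=1\neq 0=f(z)$ although $z\in T$, a contradiction; hence $M_1(g)=M_1(f)$ and $g=f$.

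I would prove the key lemma by a minimal-element argument. Put $S:=\textup{Conv}(P(f)\cup\{y\})\cap M_0(f)$, a finite set containing $y$, and define a relation on $S$ by $z'\preceq z$ iff $z'\in\textup{Conv}(P(f)\cup\{z\})$. Each $z\in S$ has $z\notin P(f)$, so $z$ is a vertex of $\textup{Conv}(P(f)\cup\{z\})$ and every other vertex of that polytope is a vertex of $P(f)$, hence lies in $M_1(f)$; this yields antisymmetry ($z'\preceq z\preceq z'$ forces $\textup{Conv}(P(f)\cup\{z\})=\textup{Conv}(P(f)\cup\{z'\})$, and then $z=z'$), while transitivity is inclusion of the corresponding convex hulls. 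Taking $z$ to be $\preceq$-minimal and using $\textup{Conv}(P(f)\cup\{z\})\subseteq\textup{Conv}(P(f)\cup\{y\})$ gives $\textup{Conv}(P(f)\cup\{z\})\cap M_0(f)=\textup{Conv}(P(f)\cup\{z\})\cap S=\{z\}$, i.e.\ $z\in D(f)$. The hard part is precisely this lemma: one must rule out $g$ acquiring extra $1$-points "hidden behind $P(f)$" in parts of $M_0(f)$ not met by $D(f)$, and the partial-order device is what makes the argument clean, whereas simply taking an extreme point of $\textup{Conv}(P(f)\cup S)$ does not suffice.
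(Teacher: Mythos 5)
Your proof is correct and follows the same overall structure as the paper's: show every point of $\textup{Vert}(P(f))\cup D(f)$ is essential, show this set is a teaching set, and conclude uniqueness and minimality from the fact that every teaching set contains every essential point. The one substantive difference is that the paper merely asserts the key step that for any $x\in M_0(f)$ the set $\textup{Conv}(P(f)\cup\{x\})\cap M_0(f)$ meets $D(f)$, whereas your partial-order/minimal-element lemma actually proves it, so your write-up is, if anything, more complete than the original.
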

\begin{proof}
	If $M_1(f) = \emptyset$, then $f \equiv 0$, and therefore $S(f) = E_n^d$. Clearly, in this case $S(f)$ is a unique minimal teaching set for $f$.
    
	Now let $M_1(f) \neq \emptyset$. We split the proof of this case into two parts. 
	At first we show that all points from $\textup{Vert}(P(f)) \cup D(f)$ are essential, and
	then we prove that this set is a unique minimal teaching set.
\begin{enumerate}
\item
Let $f': E_n^d \rightarrow \{0,1\}$ be a function which 
differs from $f$ in a unique point $x \in \textup{Vert}(P(f))$.
Obviously $P(f') \cap M_0(f') = \emptyset$
and  $f'$ belongs to $\mathfrak{T} (d,n,*)$. Therefore $x$
is essential for $f$ with respect to $\mathfrak{T} (d,n,*)$.
Now let $f': E_n^d \rightarrow \{0,1\}$ be a function which 
differs from $f$ in a  unique point $x \in D(f)$. 
The choice of $x$ implies that the function $f'$ belongs to $\mathfrak{T} (d,n,*)$ and hence $x$ is essential point of $f$ with respect to $\mathfrak{T} (d,n,*)$.

\item
Since $f$ belongs to the class $\mathfrak{T} (d,n,*)$, knowing values of the function in $\textup{Vert}(P(f))$ is sufficient to recover $f$ in $M_1(f)$.
Further, for every point 
$x \in M_0(f)$ such that $|\textup{Conv}(P(f) \cup \{x\}) \cap M_0(f)| > 1$ the set $\textup{Conv}(P(f) \cup \{x\}) \cap M_0(f)$ necessarily contains at least one point from $D(f)$. 
Therefore, to recover $f$ in $M_0(f)$ it is sufficient to know the function values in points from $D(f)$ and $\textup{Vert}(P(f))$.
This leads us to a conclusion that $\textup{Vert}(P(f)) \cup D(f)$ is a teaching set. 
Moreover, since all points in this set are essential and any teaching set contains all essential points, we conclude that $\textup{Vert}(P(f)) \cup D(f)$ is a unique minimal teaching set and coincides with $S(f)$.
\end{enumerate}
\end{proof}

\begin{lemma}
\label{lem:m1}
	Let $f \in \mathfrak{T}(d,n,k), d \geq 2, k \geq 2$ and $M_1(f) = \{ x' \}$. Then
	$$
		S(f, \mathfrak{T}(d,n,k)) = \{ x' \} \cup \{ x \in E_n^d : \textup{GCD}(|x_1 - x_1'|, \ldots, |x_d-x_d'|) = 1 \},
	$$
	and $S(f, \mathfrak{T}(d,n,k))$ is a unique teaching set of $f$ with respect to $\mathfrak{T}(d,n,k)$ and
	$$
		|S(f, \mathfrak{T}(d,n,k))| = \Theta(n^d).
	$$
\end{lemma}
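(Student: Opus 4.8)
The plan is to prove the three assertions of Lemma \ref{lem:m1} in sequence: first the characterization of the essential point set, then uniqueness of the teaching set, and finally the cardinality estimate. Throughout, write $x'$ for the unique $1$-point and set $T = \{ x' \} \cup \{ x \in E_n^d : \textup{GCD}(|x_1 - x_1'|, \ldots, |x_d-x_d'|) = 1 \}$.

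First I would show $T \subseteq S(f,\mathfrak{T}(d,n,k))$. The point $x'$ is trivially essential: the all-zero function lies in $\mathfrak{T}(d,n,k)$ (it is $k$-threshold, e.g.\ by taking $k$ copies of a hyperplane that excludes all of $E_n^d$) and differs from $f$ only at $x'$. Now take $x \neq x'$ with $\textup{GCD}(|x_1-x_1'|,\ldots,|x_d-x_d'|)=1$; let $f'$ agree with $f$ except $f'(x)=1$, so $M_1(f')=\{x',x\}$. The GCD condition says the only integer points on the closed segment $[x',x]$ are its endpoints, so $\textup{Conv}(M_1(f')) \cap M_0(f') = \emptyset$ and $|\textup{Vert}(P(f'))| = 2$; Lemma \ref{lem:statem-line} then gives $f' \in \mathfrak{T}(d,n,k)$ for every $k\ge 2$, so $x$ is essential. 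For the reverse inclusion $S(f,\mathfrak{T}(d,n,k)) \subseteq T$: suppose $x \notin T$, i.e.\ $x \neq x'$ and $g := \textup{GCD}(|x_1-x_1'|,\ldots,|x_d-x_d'|) \geq 2$. Any $h \in \mathfrak{T}(d,n,k)$ differing from $f$ only at $x$ must have $h(x)=1$, hence $M_1(h) \supseteq \{x',x\}$, hence $\textup{Conv}(M_1(h))$ contains the intermediate integer point $z = x' + \tfrac{1}{g}(x-x')$. Since $z \notin \{x',x\}$ we need $h(z)=1$, but $h$ agrees with $f$ off $x$, so $f(z)=1$, contradicting $M_1(f)=\{x'\}$. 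Thus $x$ is not essential, proving $S(f,\mathfrak{T}(d,n,k)) = T$.

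Next I would argue that $T$ is a teaching set (and therefore, by the standard observation that any teaching set contains all essential points, the unique minimal one). Let $h \in \mathfrak{T}(d,n,k)$ agree with $f$ on $T$. Since $x' \in T$ and $f(x')=1$, we have $h(x')=1$, so $M_1(h) \neq \emptyset$ and $P(h)$ is a nonempty polytope containing $x'$. If $M_1(h) \neq \{x'\}$, pick $y \in M_1(h) \setminus \{x'\}$; then $P(h)$ contains the whole segment $[x',y]$, and in particular the integer point $y^* = x' + \tfrac{1}{g'}(y-x')$ where $g' = \textup{GCD}(|y_1-x_1'|,\dots,|y_d-x_d'|)$. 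This $y^*$ satisfies $\textup{GCD}(|y^*_1-x'_1|,\dots,|y^*_d-x'_d|)=1$, so $y^* \in T$, and $y^* \in P(h) \subseteq \textup{Conv}(M_1(h))$ forces $h(y^*)=1$; but $f(y^*)=0$ since $y^*\neq x'$, contradicting agreement on $T$. Hence $M_1(h)=\{x'\}=M_1(f)$, i.e.\ $h=f$, so $T$ is teaching. Uniqueness of the minimal teaching set is then immediate.

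Finally, the cardinality bound $|T| = \Theta(n^d)$. The upper bound $O(n^d)$ is trivial since $T \subseteq E_n^d$ and $|E_n^d| = n^d$. For the lower bound, it suffices to exhibit $\Omega(n^d)$ points of $E_n^d$ coprime (in the displaced-coordinate sense) to $x'$; the cleanest route is to observe that the density of such points is bounded below by a positive constant — for instance, among the points $x$ with $x_1 - x_1'$ odd, or more simply one can count directly: the number of $x \in E_n^d$ with $\textup{GCD}(|x_1-x_1'|,\dots,|x_d-x_d'|)>1$ is at most $\sum_{p \text{ prime}} \lceil n/p \rceil^d = O(n^{d-1}\cdot n) \cdot \sum_p p^{-d}$, which is $o(n^d)$ for $d \geq 2$ since $\sum_p p^{-d} < \infty$; I would phrase this via a simple inclusion-exclusion / union bound over primes $p \le n$. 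Subtracting, $|T| \geq n^d - o(n^d) = \Omega(n^d)$. The only mildly delicate point in the whole argument is making this last counting bound rigorous while keeping it short; everything else is a direct application of Lemma \ref{lem:statem-line} together with the elementary fact that the interior integer points of a segment with endpoints $x',x$ are exactly $x' + \tfrac{j}{g}(x-x')$ for $0<j<g$, $g = \textup{GCD}(|x_1-x_1'|,\dots,|x_d-x_d'|)$.
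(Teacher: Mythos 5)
Your structural argument matches the paper's: you use Lemma \ref{lem:statem-line} to show that every point $x$ with $\textup{GCD}(|x_1-x_1'|,\ldots,|x_d-x_d'|)=1$ is essential (via the function with $M_1=\{x',x\}$), and you prove the teaching-set property by observing that any other candidate value-$1$ point forces an intermediate integer point of the segment from $x'$, which lies in $T$ and is $0$-valued for $f$ --- this is exactly the paper's collinearity argument. You additionally give an explicit direct proof of the reverse inclusion $S(f)\subseteq T$, which the paper leaves implicit (it follows from $T$ being a teaching set, since every essential point belongs to every teaching set); that is harmless redundancy.

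The one genuine issue is in the cardinality bound, where you diverge from the paper. The paper restricts to a two-dimensional slice and invokes the asymptotic $\sum_{i\le n}\varphi(i)=\frac{3}{\pi^2}n^2+O(n\ln n)$, then multiplies by $n^{d-2}$. You instead propose a union bound over primes, but your quantitative claim is false as stated: the number of $x$ with $\textup{GCD}(|x_1-x_1'|,\ldots,|x_d-x_d'|)>1$ is \emph{not} $o(n^d)$; it is $\Theta(n^d)$ with density tending to $1-1/\zeta(d)$ (about $0.392$ for $d=2$), and your own intermediate expression $O(n^{d-1}\cdot n)\cdot\sum_p p^{-d}$ is $O(n^d)$, not $o(n^d)$. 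The argument is nonetheless salvageable: the union bound gives at most roughly $n^d\sum_{p}p^{-d}+o(n^d)$ non-coprime points, and $\sum_p p^{-d}\le\sum_p p^{-2}\approx 0.4522<1$, so $|T|\ge(1-\sum_p p^{-d})n^d-o(n^d)=\Omega(n^d)$ still follows --- but you must argue via the constant being strictly less than $1$, not via a (false) $o(n^d)$ claim, and you need to handle the $\lceil n/p\rceil$ versus $n/p$ corrections for the many primes between $\sqrt{n}$ and $n$ with some care. With that repair the proof is complete.
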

\begin{proof}

	Let $S = \{x \in E_n^d: \text{GCD}(|x_1 - x'_1|, \dots, |x_d - x'_d|) = 1\}$.
	For any $x \in S$ the segment $x'x$ does not contain other points from $E_n^d$ except $x$ and $x'$, that is $x'x \cap E_n^d = \{x', x\}$. 
	Then, according to Lemma \ref{lem:statem-line}, a function $g:~ E_n^d \rightarrow \{0, 1\}$ with $M_1(g) = \{x', x\}$ belongs to the class $\mathfrak{T}(d, n, k)$ for any $k \geq 2$.
	Since $x$ distinguishes $g$ from $f$, it is an essential point for the both functions.
	Therefore all points of $S$ are essential for $f$.
	On the other hand, $S \cup \{x'\}$ is a teaching set for $f$ because for any point $y \in E_n^d \setminus \{ S \cup \{x'\}\}$ there exists a point $y' \in S$ such that $y,y',x'$ are collinear and $y'$ is between $y$ and $x'$.
	
	Let $\varphi(i)$ be the Euler totient function. It is well known (see, for example, \cite{Euler}) that 
	$$
	\sum_{i \leq n}{\varphi(i)} = \frac{3}{\pi^2}n^2 + O(n \ln{n}).
	$$
	
	Using this formula we can get a lower bound on the cardinality of the minimal teaching set:
	$$
		|S \cup \{x'\}| = \left|\{x = (x_1,\dots,x_d) \in E_n^d: \text{GCD}(|x_1 - x'_1|, \dots, |x_d - x'_d|) = 1\}\right| + 1 \geq
	$$
	$$
		\geq \left|\{x = (x_1,\dots,x_d) \in E_n^d: x_3= \dots = x_d=0,\text{GCD}(|x_1 - x'_1|, |x_2 - x'_2|) = 1\}\right| n^{d-2} \geq
	$$
	$$
		\geq \left(\sum_{i \leq n/2}{\varphi(i)}\right) n^{d-2} = \left(\frac{3}{\pi^2}\left(\frac{n}{2}\right)^2 + O\left(\frac{n}{2} \ln{\frac{n}{2}}\right)\right) n^{d-2} = \Omega(n^d).
	$$
	
	Since $|E_n^d|=n^d$, this lower bound matches a trivial upper bound, and therefore $|S(f, \mathfrak{T}(d,n,k))| = \Theta(n^d)$.
\end{proof}

\subsection{Teaching sets of functions in $\mathfrak{T}(2,n,*)$}

In the previous section we proved that for a function from $\mathfrak{T}(d,n,*), d \geq 2$ the set of its essential points is also the unique minimal teaching set.
In this section we consider the class $\mathfrak{T}(2,n,*)$ and describe the structure of the set of essential points for a function in this class. 
We also give an upper bound on the cardinality of this set.

Let us consider an arbitrary function $f \in \mathfrak{T} (2,n,*)$. 
Note that $P(f)$ can be the empty set, a point, a segment or a polygon. 
Let $P(f)$ be a segment or a polygon, that is $|M_1(f)| > 1$, and let $a_{1}x_1 + a_{2}x_2 = a_{0}$ be the edge equality for an edge $e$ of $P(f)$. 
Without loss of generality we may assume that $\text{GCD}(a_{1}, a_{2}) = 1$. 
Denote by \emph{edge inequality} for edge $e$ inequality $a_{1}x_1 + a_{2}x_2 \leq a_{0}$ or/and $a_{1}x_1 + a_{2}x_2 \geq a_{0}$ if it is true for all points of $P(f)$.
Note that if $P(f)$ is a segment, then it has one edge but two edge inequalities corresponding to the edge.
If $P(f)$ is a polygon, then it has exactly one edge inequality for each edge.
Hence the number of edge inequalities for $P(f)$ is equal to the number of its vertices.

Let $f$ be a function from $\mathfrak{T} (2,n,*)$ with $|M_1(f)| > 1$ and let
$$
a_{i1}x_1 + a_{i2}x_2 \leq a_{i0}, \quad  i=1,\dots,|\textup{Vert}(P(f))| 
$$ 
be edge inequalities for $P(f)$. The \emph{extended edge inequality} for an edge $e$ of $P(f)$ is $a_1x_1 + a_2x_2 \leq a_0 + 1$, where $a_1x_1 + a_2x_2 \leq a_0$ is the corresponding edge inequality for $e$.
By $P'(f)$ we denote the following extension of $P(f)$
\begin{equation}\label{eq:deltaP}
	\{x=(x_1,x_2) : a_{i1}x_1 + a_{i2}x_2 \leq a_{i0} + 1, \quad  i=1,\dots,|\textup{Vert}(P(f))|\}.
\end{equation}
We also let
$$
\Delta P(f) = P'(f) \setminus P(f).
$$
It follows from the definition that $P'(f)$ contains $P(f)$, and for every straight line $l'$ containing an edge of $P'(f)$ there exists an edge in $P(f)$ belonging to the closest parallel to the $l'$ straight line which contains integer points.

If $P$ is a polygon then denote by $\mathcal{P}(P)$ the perimeter of $P$, 
by $\mathcal{S}(P)$ the area of $P$ and by $q_{min}(P)$ the minimum angle between neighboring edges of $P$. 

The next proposition uses the Pick's formula (see \cite{Trainin}) for the area of a convex polygon $P$ with integer vertices:
$$
\mathcal{S}(P) = \textup{Int}(P) + \frac{B(P)}{2} - 1.
$$

\begin{proposition}\label{prop:deltaP}
Let $f \in \mathfrak{T} (2,n,*)$ and $\mathcal{S}(P(f)) > 0$. Then $D(f)= \Delta P(f) \cap M_0(f)$.
\end{proposition}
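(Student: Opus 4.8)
The plan is to prove the two inclusions $D(f) \subseteq \Delta P(f) \cap M_0(f)$ and $\Delta P(f) \cap M_0(f) \subseteq D(f)$ separately, using the geometric characterization of $D(f)$ as the set of $0$-points $x$ for which $\textup{Conv}(P(f) \cup \{x\})$ meets $M_0(f)$ only in $x$ itself. Throughout, recall that $x \in D(f)$ means in particular that $x$ is an integer point not in $P(f)$ (since $P(f) \cap M_0(f) = \emptyset$ because $f$ is a $k$-threshold function), and that $\Delta P(f)$ is the integer-free ``slab'' of width one obtained by relaxing each edge inequality by $1$.

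First I would show $\Delta P(f) \cap M_0(f) \subseteq D(f)$. Take an integer point $x \in \Delta P(f) \cap M_0(f)$, so $x \notin P(f)$ but $x$ satisfies every extended edge inequality $a_{i1}x_1 + a_{i2}x_2 \le a_{i0} + 1$ with $\textup{GCD}(a_{i1},a_{i2}) = 1$. Suppose for contradiction some integer point $y \ne x$ lies in $\textup{Conv}(P(f) \cup \{x\}) \cap M_0(f)$. Since $y \notin P(f)$ (again $P(f)$ has no $0$-points), $y$ must violate some edge inequality, say the $i$-th: $a_{i1}y_1 + a_{i2}y_2 \ge a_{i0} + 1$, because $y$ is an integer point and the next parallel integer line beyond the edge is at value $a_{i0}+1$. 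But $y \in \textup{Conv}(P(f) \cup \{x\})$ means $y$ is a convex combination of $x$ and a point $z \in P(f)$; since $z$ satisfies $a_{i1}z_1 + a_{i2}z_2 \le a_{i0}$ and $x$ satisfies $a_{i1}x_1 + a_{i2}x_2 \le a_{i0}+1$, the value of $a_{i1}(\cdot)_1 + a_{i2}(\cdot)_2$ at $y$ is at most $a_{i0}+1$, with equality forcing the combination to be degenerate, i.e. $y = x$ (using that $z$'s value is strictly below $a_{i0}+1$ whenever the coefficient on $x$ is less than $1$). This contradiction shows $x \in D(f)$.

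Next I would show $D(f) \subseteq \Delta P(f) \cap M_0(f)$, equivalently that $x \in D(f)$ implies $x \in \Delta P(f)$, i.e. $x$ satisfies all extended edge inequalities. Suppose $x \in D(f)$ but $x$ violates the $i$-th extended inequality: $a_{i1}x_1 + a_{i2}x_2 \ge a_{i0} + 2$. The idea is to exhibit an integer point $y \in \textup{Conv}(P(f) \cup \{x\})$ with $y \notin P(f)$ and $y \ne x$, contradicting $x \in D(f)$. Because $\textup{GCD}(a_{i1},a_{i2}) = 1$ and $\mathcal{S}(P(f)) > 0$, the edge $e$ corresponding to the $i$-th inequality is a genuine edge of positive length containing at least two integer points of $E_n^d$, and the line $a_{i1}z_1 + a_{i2}z_2 = a_{i0}+1$ contains integer points; a lattice point on that intermediate line that lies in the convex hull of $x$ and $e$ (and inside the cube) serves as $y$. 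Here is where Pick's formula enters: it guarantees that the region $\textup{Conv}(P(f) \cup \{x\})$, having area strictly exceeding that of $P(f)$ and straddling at least two consecutive integer lines parallel to $e$, contains a lattice point strictly between those lines — the only subtlety is confirming such a point falls inside the hull and inside $E_n^d$, which follows by choosing it close enough to the edge $e$.

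The main obstacle I anticipate is the second inclusion, specifically the careful verification that when $x$ overshoots an edge by at least $2$, one can always place an integer witness point $y$ on the first relaxed line $a_{i1}z_1+a_{i2}z_2 = a_{i0}+1$ inside $\textup{Conv}(P(f)\cup\{x\})$: the hull is a thin sliver near a possibly short edge, and one must rule out that all its lattice points on that intermediate line lie outside the cube or outside the hull. Handling the degenerate case where $P(f)$ is a segment (two edge inequalities for one edge) versus a genuine polygon also requires a small separate check, but the hypothesis $\mathcal{S}(P(f)) > 0$ excludes the segment case, so only the polygon case matters and this is exactly what makes Pick's formula applicable.
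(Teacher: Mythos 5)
Your overall strategy coincides with the paper's: prove both inclusions, with Pick's formula doing the work on the triangle spanned by the offending edge and the outside point. Your first inclusion is complete and correct: an integer point $x\in\Delta P(f)$ lies on some line $a_1x_1+a_2x_2=a_0+1$ with $\textup{GCD}(a_1,a_2)=1$, and your convex-combination estimate (any $y=\lambda x+(1-\lambda)z$ with $z\in P(f)$ has level at most $a_0+\lambda$, so an integer point violating the edge inequality forces $\lambda=1$, i.e.\ $y=x$) is exactly the content behind the paper's one-line observation that integer points of $\Delta P(f)$ lie on the border of $P'(f)$.

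The second inclusion, however, is left with a genuine gap at precisely the step you flag as the main obstacle. You need an integer witness $y\neq x$ with $y\in\textup{Conv}(P(f)\cup\{x\})\cap M_0(f)$ whenever $x$ violates an extended edge inequality, and you assert its existence ``by choosing it close enough to the edge $e$'' --- but lattice points cannot be chosen close to anything, and applying Pick to all of $\textup{Conv}(P(f)\cup\{x\})$ only says its area exceeds $\mathcal{S}(P(f))$, which by itself produces no lattice point in the sliver, let alone one on the specific line $a_1z_1+a_2z_2=a_0+1$ and inside $E_n^2$. The repair is to localize to the triangle $Tr=\textup{Conv}(e\cup\{x\})$, which is exactly what the paper does: $Tr\subseteq\textup{Conv}(P(f)\cup\{x\})\subseteq\textup{Conv}(E_n^2)$, so every lattice point of $Tr$ is automatically in $E_n^2$ and in the hull, and any lattice point of $Tr$ other than $x$ and the points of $e$ has integer level strictly above $a_0$, hence lies in $M_0(f)$ and already contradicts $x\in D(f)$. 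Thus for $x\in D(f)$ the lattice points of $Tr$ are exactly $(e\cap E_n^2)\cup\{x\}$; Pick then gives $\mathcal{S}(Tr)=\frac{|e\cap E_n^2|-1}{2}$, and comparison with $\mathcal{S}(Tr)=\frac{l(e)h_x}{2}$ forces $h_x$ to equal the distance from the line of $e$ to the line $a_1z_1+a_2z_2=a_0+1$, so the extended inequality holds with equality. In your contrapositive form: if the level of $x$ is at least $a_0+2$ then $\mathcal{S}(Tr)\geq|e\cap E_n^2|-1$, which exceeds the Pick count $\frac{|e\cap E_n^2|-1}{2}$ obtained from the known lattice points alone, so an extra lattice point of $Tr$ exists and serves as the witness. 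Note that you do not need the witness to lie on the intermediate line $a_1z_1+a_2z_2=a_0+1$; insisting on that is an unnecessary extra difficulty.
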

\begin{proof}
Note that by construction all integer points of $\Delta P(f)$ lie on the border of $P'(f)$, which implies that $\Delta P(f) \cap M_0(f) \subseteq D(f)$.
Consider $x=(x_1,x_2)$ such that $|\textup{Conv}(P(f)) \cup \{x\}) \cap M_0(f)| = 1$. 
To show that $x \in \Delta P(f)$ it is sufficient to prove that $x$ is a solution of the system of inequalities (\ref{eq:khalfspace1}), that is each extended edge inequality for $P(f)$ holds true for $x$.
Obviously, if an edge inequality is true for $x$, then the corresponding extended edge inequality is also true.
Let $e$ be the edge whose edge inequality is false for $x$, that is $a_1x_1+a_2x_2 > a_0$.
All integer points of the triangle $Tr=\textup{Conv}(e \cup \{x\})$ belongs to $e \cup \{x\}$.
Since $Tr$ has integer vertices, its area can be calculated by the Pick's formula: 
$$
\mathcal{S}(Tr) = \frac {|(e \cup \{x\}) \cap E_n^2|}{2} - 1 = \frac {|e \cap E_n^2| - 1}{2}.
$$
Comparing resulting equation with the classical triangle area formula $\mathcal{S}(Tr) = \frac{l(e)h_x}{2}$ we conclude that
$$
h_x = \frac{|e \cap E_n^2| - 1}{l(e)},
$$
where $h_x$ is the distance between point $x$ and the line containing $e$.

Now consider an integer point $y=(y_1,y_2)$ for which $a_1y_1 + a_2y_2 = a_0 + 1$.
Using the same arguments it is easy to show that
$$
h_y = \frac{|e \cap E_n^2| - 1}{l(e)}.
$$
Hence, $x$ and all integer points of the line $a_1y_1 + a_2y_2 = a_0 + 1$ have the same distance to the line containing $e$.
It means that $a_1x_1 + a_2x_2 = a_0 + 1$, that is the extended edge inequality for $e$ is true for $x$ and $x$ belongs to $P(f)$, therefore 
$D(f) \subseteq \Delta P(f) \cap M_0(f)$.
\end{proof}

\begin{corollary}
\label{corollary:S*}
Let $f \in \mathfrak{T} (2,n,*)$ and $\mathcal{S}(P(f)) > 0$. 
Then
$$
S(f, \mathfrak{T}(2,n,*)) = (\Delta P(f) \cap M_0(f)) \cup \textup{Vert}(P(f)).
$$ 
\end{corollary}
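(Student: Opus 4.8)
The plan is to obtain the corollary as an immediate consequence of Theorem \ref{th:S_f} and Proposition \ref{prop:deltaP}, so the ``proof'' is really just a bookkeeping step. First I would note that the hypothesis $\mathcal{S}(P(f)) > 0$ forces $P(f)$ to be a genuine two-dimensional polygon; in particular $M_1(f) \neq \emptyset$, since $M_1(f)$ already contains the (at least three) vertices of $P(f)$. Thus $f$ falls into the second case of Theorem \ref{th:S_f}, which gives
$$
S(f, \mathfrak{T}(2,n,*)) = \textup{Vert}(P(f)) \cup D(f).
$$

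Next I would invoke Proposition \ref{prop:deltaP}, whose hypothesis $\mathcal{S}(P(f)) > 0$ is exactly the one we are assuming, to replace $D(f)$ by $\Delta P(f) \cap M_0(f)$. Substituting this into the displayed identity yields
$$
S(f, \mathfrak{T}(2,n,*)) = \textup{Vert}(P(f)) \cup (\Delta P(f) \cap M_0(f)),
$$
which, up to the order of the union, is precisely the asserted description.

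There is essentially no obstacle: the corollary is a direct corollary of the two preceding results, and the only point deserving an explicit remark is the observation that $\mathcal{S}(P(f)) > 0$ places us in the non-degenerate branch of Theorem \ref{th:S_f} (it rules out the cases where $P(f)$ is empty, a point, or a segment). One might also note, for completeness, that the symbol $\textup{Vert}(P(f))$ denotes the same vertex set in both statements, since under the present hypothesis $P(f)$ is a polygon; this is immediate from the definitions and requires no further argument.
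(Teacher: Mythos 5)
Your proof is correct and follows exactly the route the paper intends: the corollary is stated without proof precisely because it is the immediate combination of Theorem \ref{th:S_f} (giving $S(f,\mathfrak{T}(2,n,*)) = \textup{Vert}(P(f)) \cup D(f)$ in the non-empty case) with Proposition \ref{prop:deltaP} (giving $D(f) = \Delta P(f) \cap M_0(f)$ under $\mathcal{S}(P(f)) > 0$). Your remark that $\mathcal{S}(P(f)) > 0$ guarantees $M_1(f) \neq \emptyset$ and places $f$ in the second branch of the theorem is the only detail worth spelling out, and you have done so.
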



The next lemma establishes relationship between the perimeters of $P(f)$ and $P'(f)$ to help us to estimate the cardinality of the set of essential points of a function from $\mathfrak{T}(2,n,*)$.

\begin{lemma}\label{lem:perimeterPP'}
Let $f \in \mathfrak{T} (2,n,*)$ and $\mathcal{S}(P(f)) > 0$. Then 
$$
\mathcal{P}(P'(f)) \leq \mathcal{P}(P(f)) + 2 \sum_{i=1}^{|\textup{Vert}(P(f))|} \cot \frac{q_{i}(P(f))}{2},
$$
where $q_i(P(f))$ for $i \in \{1,\dots,|\textup{Vert}(P(f))|\}$ are the angles between neighboring edges of $P(f)$.
\end{lemma}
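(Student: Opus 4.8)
The plan is to express $\mathcal{P}(P'(f))-\mathcal{P}(P(f))$ exactly as a sum over the vertices of $P(f)$ and then bound it termwise. Since $\mathcal{S}(P(f))>0$, the set $P(f)$ is a genuine convex polygon; write $v_1,\dots,v_m$ for its vertices with $m=|\Vertic(P(f))|$, let $q_i=q_i(P(f))$ be the interior angle at $v_i$, and let $e_i$ be the edge of $P(f)$ on the line $a_{i1}x_1+a_{i2}x_2=a_{i0}$ with $\gcd(a_{i1},a_{i2})=1$. By construction, $P'(f)$ is obtained from $P(f)$ by replacing each such line with the parallel line $a_{i1}x_1+a_{i2}x_2=a_{i0}+1$, i.e.\ by pushing the line of $e_i$ outward by the perpendicular distance $r_i:=1/\sqrt{a_{i1}^2+a_{i2}^2}$. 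By the remark made right after the definition of $P'(f)$, the polygon $P'(f)$ has exactly the edge directions of $P(f)$ and in the same cyclic order, so its vertices $v_1',\dots,v_m'$ correspond to $v_1,\dots,v_m$ and the interior angle of $P'(f)$ at $v_i'$ is again $q_i$.

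The heart of the argument is a local computation at a single vertex. Fix $v_i$ and suppose the two edges meeting at $v_i$ are pushed outward by perpendicular distances $s$ and $t$. Put $v_i$ at the origin with one of these edges along the positive $x_1$-axis and the polygon locally in the upper half-plane, so the other edge points in direction $(\cos q_i,\sin q_i)$. Intersecting the two shifted edge lines gives the new vertex $v_i'=\bigl(-\tfrac{t+s\cos q_i}{\sin q_i},\,-s\bigr)$. Projecting $v_i'$ orthogonally onto the two original edge lines and comparing with $v_i$ shows that the two edges are extended, on the $v_i$-side, by $\tfrac{t+s\cos q_i}{\sin q_i}$ and $\tfrac{s+t\cos q_i}{\sin q_i}$ respectively (these are signed quantities), so the \emph{total} extension contributed at $v_i$ is
$$
\frac{(s+t)(1+\cos q_i)}{\sin q_i}=(s+t)\cot\frac{q_i}{2}.
$$
Summing over all vertices and using that the new length of each edge $e_i$ equals its old length plus the two extensions contributed at its endpoints (each extension being counted in exactly one vertex term), we obtain
$$
\mathcal{P}(P'(f))-\mathcal{P}(P(f))=\sum_{i=1}^{m}\bigl(r_i^-+r_i^+\bigr)\cot\frac{q_i}{2},
$$
where $r_i^-$ and $r_i^+$ are the shift distances of the two edges incident to $v_i$.

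To finish, note that since the $a_{ij}$ are integers with $a_{i1},a_{i2}$ not both zero, $a_{i1}^2+a_{i2}^2\ge 1$, whence $r_i\le 1$ for every edge; and $0<q_i<\pi$ implies $\cot\tfrac{q_i}{2}>0$. Therefore $r_i^-+r_i^+\le 2$ for each $i$ and
$$
\mathcal{P}(P'(f))-\mathcal{P}(P(f))\le 2\sum_{i=1}^{m}\cot\frac{q_i}{2},
$$
which is the claimed inequality. The step I expect to require the most care is the bookkeeping in the local computation — keeping track of the signs of the individual edge extensions and confirming that they telescope into the perimeter difference — together with the (routine but genuinely needed) fact, invoked above from the remark following the definition of $P'(f)$, that the unit outward shift does not cause any edge of $P(f)$ to degenerate, so that the angle sum really ranges over all $m=|\Vertic(P(f))|$ vertices.
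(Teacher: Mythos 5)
Your local computation at a single vertex is correct (including the half--angle identity), but the argument hinges on a structural claim that is neither established nor true in general: namely, that every edge of $P(f)$ survives as an edge of $P'(f)$, with vertices in bijective correspondence, so that the signed extensions telescope into the exact value of $\mathcal{P}(P'(f))-\mathcal{P}(P(f))$. The remark after the definition of $P'(f)$ only says that every edge of $P'(f)$ comes from an edge of $P(f)$; it does not say the converse. Because the perpendicular shifts $r_i=1/\sqrt{a_{i1}^2+a_{i2}^2}$ are unequal, an edge can degenerate. Concretely, take $N\ge 2$ and a convex lattice polygon whose upper boundary contains the consecutive vertices $(-N,h-1)$, $(0,h)$, $(1,h)$, $(1+N,h-1)$. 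The three upper edge inequalities are $-x_1+Nx_2\le Nh$, $x_2\le h$ and $x_1+Nx_2\le Nh+1$, with shifts $1/\sqrt{N^2+1}$, $1$ and $1/\sqrt{N^2+1}$. The two shifted slanted lines $-x_1+Nx_2=Nh+1$ and $x_1+Nx_2=Nh+2$ meet at the point $\left(\tfrac12,\,h+\tfrac{3}{2N}\right)$, whose second coordinate is below $h+1$; hence the shifted constraint $x_2\le h+1$ is redundant in $P'(f)$, the horizontal edge of $P(f)$ has no counterpart in $P'(f)$, and the two ``new vertices'' you would compute at $(0,h)$ and $(1,h)$ do not lie in $P'(f)$. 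Your exact formula then breaks down (it assigns the top edge the negative ``length'' $1+2(1-N)=3-2N$), and it is no longer clear whether your sum over- or under-estimates $\mathcal{P}(P'(f))$, so the inequality does not follow from your bookkeeping.

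The paper sidesteps this by introducing the polygon $P''$ obtained by shifting every edge line outward by perpendicular distance exactly $1$ rather than by $r_i\le 1$. For equal shifts every edge survives --- the translate of an edge $e$ by its unit outward normal satisfies all the shifted constraints, so each facet of $P''$ has length at least $l(e)$ --- hence the combinatorial correspondence you need does hold for $P''$, every extension $\cot\frac{q_i}{2}$ is positive, and one gets the exact identity $\mathcal{P}(P'')=\mathcal{P}(P(f))+2\sum_i\cot\frac{q_i(P(f))}{2}$. Since $P'(f)\subseteq P''$ and perimeter is monotone under inclusion of convex sets, the lemma follows. Your vertex computation is essentially the one the paper applies to $P''$; the fix is to run it on $P''$ (where no edge can degenerate) and then compare $P'(f)$ with $P''$ by monotonicity, instead of trying to evaluate $\mathcal{P}(P'(f))$ exactly.
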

\begin{proof}
Denote by $P''$ the set of points satisfying such a condition that if an edge inequality is false for a point, then the distance between the point and the straight line containing the corresponding edge is at most 1.
Note that points of $P'(f)$ also satisfy the specified condition, so $P'(f) \subseteq P''$ and, consequently, $\mathcal{P}(P'(f)) \leq \mathcal{P}(P'')$ (see Fig. \ref{fig:PP''}).
Further, $P''$ is a convex polygon with $|\textup{Vert}(P(f))|$ vertices, and each edge $e''$ of $P''$ is parallel to some edge $e$ of $P(f)$ and is at distance 1 from the line containing $e$.
Let $q_i, q_{i+1}$ for some $i \in \{1, \dots, |\textup{Vert}(P(f))| - 1\}$ be the angles between $e$ and its neighboring edges in $P(f)$.
By construction of $P''$ we have:
$$
l(e'') = l(e) + \cot \frac{q_i}{2} + \cot \frac{q_{i+1}}{2}.
$$
Summing up the lengths of all edges of $P''$ we have:
$$
\mathcal{P}(P'(f)) \leq \mathcal{P}(P'') = \mathcal{P}(P(f)) + 2 \sum_{i=1}^{|\textup{Vert}(P(f))|} \cot \frac{q_{i}(P(f))}{2}.
$$
\end{proof}

\begin{figure}[p]
    \centering
    \includegraphics[width=0.48\textwidth]{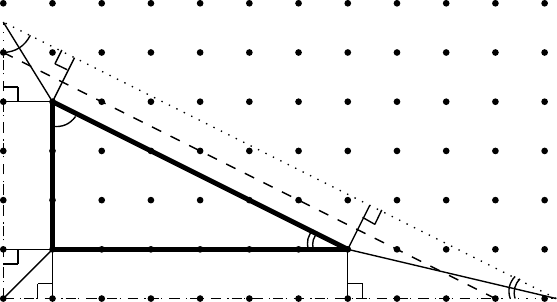}
    \caption{$P(f)$ (bold solid triangle), $P'(f)$ (dashed triangle) and $P''$ (dotted triangle).}
    \label{fig:PP''}
\end{figure}

\begin{theorem}
\label{theorem:statem-P}
Let $f \in \mathfrak{T} (2,n,*)$ and $\mathcal{S}(P(f)) > 0$. Then
$$
\left|S(f, \mathfrak{T}(2,n,*))\right| = O\left(\min\left(n, \mathcal{P}(P(f)) + \frac{1}{q_{min}(P(f))}\right)\right).
$$
\end{theorem}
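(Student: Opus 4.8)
The plan is to start from Corollary~\ref{corollary:S*}, which expresses $S(f,\mathfrak{T}(2,n,*))$ as the disjoint union $(\Delta P(f)\cap M_0(f))\cup\textup{Vert}(P(f))$, and to bound each piece in two ways: once by $O(n)$ and once by $O\!\big(\mathcal{P}(P(f))+1/q_{min}(P(f))\big)$. Since $\mathcal{S}(P(f))>0$, the set $P(f)$ is a genuine polygon, so every one of its edges joins two distinct integer points and therefore has length at least $1$; hence $|\textup{Vert}(P(f))|\le\mathcal{P}(P(f))$. For the set $\Delta P(f)\cap M_0(f)$ I will use the fact, already recorded in the proof of Proposition~\ref{prop:deltaP}, that all its points lie on the boundary of $P'(f)$, so $|\Delta P(f)\cap M_0(f)|\le B(P'(f))$. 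A standard lattice estimate bounds $B(P'(f))$: on each edge of $P'(f)$ the integer points form an arithmetic progression whose consecutive terms are at distance at least $1$, so an edge of length $\ell$ carries at most $\ell+1$ of them; summing over the (at most $|\textup{Vert}(P(f))|$) edges of $P'(f)$ gives $B(P'(f))\le\mathcal{P}(P'(f))+|\textup{Vert}(P(f))|$. Combining, $\big|S(f,\mathfrak{T}(2,n,*))\big|\le\mathcal{P}(P'(f))+2\mathcal{P}(P(f))$.

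The perimeter of $P'(f)$ is controlled by Lemma~\ref{lem:perimeterPP'}: $\mathcal{P}(P'(f))\le\mathcal{P}(P(f))+2\sum_i\cot\frac{q_i(P(f))}{2}$, and the next step is to split the sum according to the size of the angle. Every vertex with $q_i(P(f))\ge\pi/3$ satisfies $\cot\frac{q_i(P(f))}{2}\le\cot\frac{\pi}{6}=\sqrt3$, and there are at most $|\textup{Vert}(P(f))|\le\mathcal{P}(P(f))$ of them, so these vertices contribute $O(\mathcal{P}(P(f)))$ to the sum. For a vertex with $q_i(P(f))<\pi/3$ the exterior angle $\pi-q_i(P(f))$ exceeds $2\pi/3$; since the exterior angles of a convex polygon sum to $2\pi$, there are at most three such vertices, and each contributes at most $\cot\frac{q_{min}(P(f))}{2}=O\!\big(1/q_{min}(P(f))\big)$, using that $\cot(t/2)=O(1/t)$ for $t\in(0,\pi)$. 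Adding the two contributions yields $\sum_i\cot\frac{q_i(P(f))}{2}=O\!\big(\mathcal{P}(P(f))+1/q_{min}(P(f))\big)$, hence $\mathcal{P}(P'(f))=O\!\big(\mathcal{P}(P(f))+1/q_{min}(P(f))\big)$, and with the previous paragraph $\big|S(f,\mathfrak{T}(2,n,*))\big|=O\!\big(\mathcal{P}(P(f))+1/q_{min}(P(f))\big)$.

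It remains to prove $\big|S(f,\mathfrak{T}(2,n,*))\big|=O(n)$. Every point of the set lies in $E_n^2$ (the vertices of $P(f)$ because $M_1(f)\subseteq E_n^2$, the points of $\Delta P(f)\cap M_0(f)$ because $M_0(f)\subseteq E_n^2$) and on the boundary of a convex polygon, namely $P(f)$ or $P'(f)$. So it suffices to show that the boundary of any convex polygon $Q$ meets $E_n^2$ in $O(n)$ points. For a value $c\in\{0,\dots,n-1\}$ the set $\{x\in\partial Q:x_1=c\}$ is, by convexity, either empty, a single point, a pair of points, or (precisely when the line $x_1=c$ supports $Q$ along a vertical edge) that vertical edge. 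A convex polygon has at most two vertical edges, so the exceptional columns contribute at most $2n$ integer points of $E_n^2$ altogether, while each of the remaining columns contributes at most two; this gives $O(n)$ in total. Hence $\big|S(f,\mathfrak{T}(2,n,*))\big|=O(n)$, and together with the bound of the previous paragraph we obtain $\big|S(f,\mathfrak{T}(2,n,*))\big|=O\!\big(\min\big(n,\mathcal{P}(P(f))+1/q_{min}(P(f))\big)\big)$.

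I expect the main obstacle to be the angle-splitting step: one must observe that the cotangent sum coming from Lemma~\ref{lem:perimeterPP'} can only be large because of the very acute vertices, of which there are at most three by the ``exterior angles sum to $2\pi$'' argument, while all the other vertices each contribute only a constant and are anyway at most $\mathcal{P}(P(f))$ in number. The accompanying lattice-point counting is routine, but the $O(n)$ bound genuinely requires the ``at most two vertical edges'' observation; without it a single long vertical edge would spuriously suggest a $\Theta(n^2)$ term.
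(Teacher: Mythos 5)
Your proof is correct, and its main line is the same as the paper's: express $S(f,\mathfrak{T}(2,n,*))$ via Corollary~\ref{corollary:S*}, bound the number of lattice points by the perimeters of $P(f)$ and $P'(f)$, invoke Lemma~\ref{lem:perimeterPP'}, and control the cotangent sum by noting that all but a bounded number of angles are at least $\pi/3$ (the paper observes there are at most $2$ such acute angles, you say at most $3$; either suffices, and your ``exterior angles sum to $2\pi$'' argument in fact gives $2$). The one place you genuinely diverge is the $O(n)$ half of the minimum: the paper simply asserts $\mathcal{P}(P(f))+\mathcal{P}(P'(f))=O(n)$ and stops, whereas you bypass perimeters entirely and count lattice points of $E_n^2$ on the boundary of a convex polygon column by column, using that a convex polygon has at most two vertical edges. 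Your route is the more self-contained one here: the paper's assertion is true but not immediate, since $P'(f)$ can protrude outside $E_n^2$ by a distance growing with $n$ at a very acute vertex, so bounding $\mathcal{P}(P'(f))$ by $O(n)$ requires an extra argument (e.g., that each vertex of $P'(f)$ lies within $O(n)$ of the corresponding vertex of $P(f)$ because the edge normals are primitive integer vectors of norm $O(n)$); your column count avoids this issue because it only ever uses that the relevant points lie in $E_n^2$. The remaining small differences (bounding the boundary lattice points by $\mathcal{P}+|\textup{Vert}|$ rather than by $\mathcal{P}$ alone) are cosmetic and do not affect the asymptotics.
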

\begin{proof}

By Corollary \ref{corollary:S*} we have $S(f, \mathfrak{T}(2,n,*)) = (\Delta P(f) \cap M_0(f)) \cup \textup{Vert}(P(f))$.
Since every point of $S(f, \mathfrak{T}(2,n,*))$ is integer and either belongs to
the border of $P(f)$ or to the border of $P'(f)$, the cardinality of 
$S(f, \mathfrak{T}(2,n,*))$ can be bounded from above by the sum of the
perimeters $\mathcal{P}(P(f))$ and $\mathcal{P}(P'(f))$.
	So we have:
	$$
	|S(f, \mathfrak{T}(2,n,*))| \leq \mathcal{P}(P(f)) + \mathcal{P}(P'(f)) \leq  
	2\mathcal{P}(P(f)) + \sum_{i=1}^{|\textup{Vert}(P(f))|}{2\cot{\frac{q_i(P(f))}{2}}},
    $$
where $q_i$ for $i \in \{1,\dots,|\textup{Vert}(P(f))|\}$ are the angles between neighboring edges of $P(f)$.
    
As the number of integer vertices of a convex polygon is not more than the perimeter of the polygon, we have $|\textup{Vert}(P(f))| \leq \mathcal{P}(P(f))$. Obviously, only 2 angles of a convex polygon can be less than $\frac{\pi}{3}$. Therefore	
  	$$
	2\mathcal{P}(P(f)) + \sum_{i=1}^{|\textup{Vert}(P(f))|}{2\cot{\frac{q_i(P(f))}{2}}} \leq
    $$
	$$
	\leq 2\mathcal{P}(P(f)) + 4\cot{\frac{q_{min(P(f))}}{2}} + \sqrt{3}(\mathcal{P}(P(f)) - 2).
    $$  
    Since $0 \leq \frac{q_{min}(P(f))}{2} < \frac{\pi}{2}$, we can conclude:
    $$
    \cot{\frac{q_{min}(P(f))}{2}} \leq \frac{1}{\sin{\frac{q_{min(P(f))}}{2}}} = O\left(\frac{1}{q_{min}(P(f))}\right).
    $$
    Therefore
    $$
    |S(f, \mathfrak{T}(2,n,*))| = O\left(\mathcal{P}(P(f)) + \frac{1}{q_{min}(P(f))}\right).
	$$
    Finally, since
    $$
    \mathcal{P}(P(f)) + \mathcal{P}(P'(f)) = O(n),
    $$
    we conclude:
    
    $$
    |S(f, \mathfrak{T}(2,n,*))| = O\left(\min\left(n, \mathcal{P}(P(f)) + \frac{1}{q_{min}(P(f))}\right)\right).
	$$
\end{proof}

\begin{example}
Consider a function $f \in \mathfrak{T}(2,12,*)$ (see Fig. \ref{fig:P}).
The gray set is $\Delta P(f)$. 
The black stars are the points from $\textup{Vert}(P(f))$
and the white stars are the points from $\Delta P(f) \cap M_0(f)$.
\end{example}

\begin{figure}[p]
    \centering
    \includegraphics[width=0.75\textwidth]{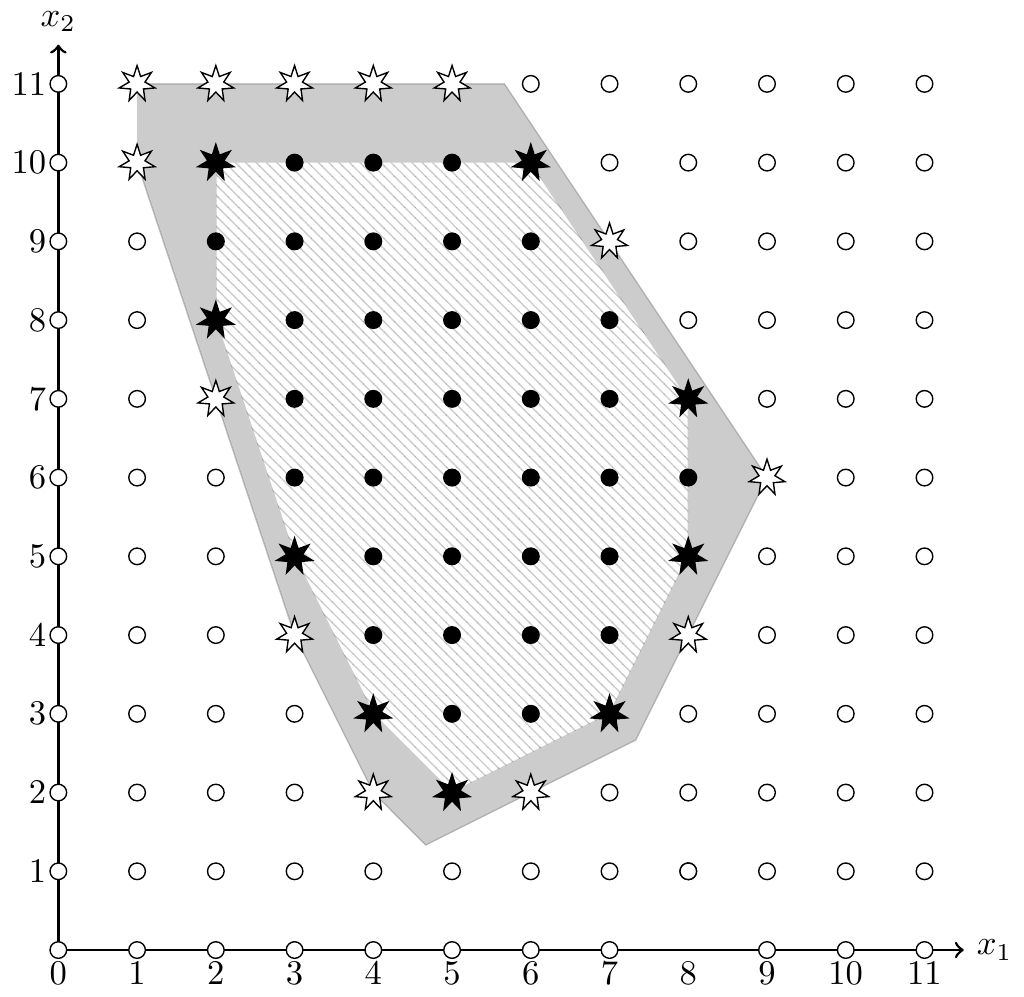}
    \caption{The gray set is $\Delta P(f)$, the stripped area is $P(f)$, and the union of both of them is $P'(f)$.}
    \label{fig:P}
\end{figure}

\begin{proposition} \label{prop:P}
Let $f \in \mathfrak{T} (2,n,*)$ and $M_1(f) > 1$. Then $f$ is a $|\textup{Vert}(P(f))|$-threshold function and the sets of essential points of $f$ with respect to $\mathfrak{T} (2,n,*)$ and with respect to $\mathfrak{T} (2,n, |\textup{Vert}(P(f))| + 1)$ coincide.
\end{proposition}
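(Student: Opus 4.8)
The plan is to establish the two assertions in turn, writing $m = |\textup{Vert}(P(f))|$ (so $m \geq 2$, since $|M_1(f)| > 1$). For the first assertion, observe that since $f \in \mathfrak{T}(2,n,*)$ the set $M_1(f)$ is cut out of $E_n^2$ by some polyhedron $Q$, and $P(f) = \textup{Conv}(M_1(f)) \subseteq Q$; hence $P(f) \cap E_n^2 = M_1(f)$, i.e. $P(f) \cap M_0(f) = \emptyset$. If $P(f)$ is a polygon ($m \geq 3$), its $m$ edge inequalities already define $P(f)$ and therefore cut out exactly $M_1(f)$, so $f \in \mathfrak{T}(2,n,m)$; if $P(f)$ is a segment ($m = 2$), the same emptiness of $P(f)\cap M_0(f)$ together with Lemma~\ref{lem:statem-line} gives $f \in \mathfrak{T}(2,n,2)$. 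In both cases $f \in \mathfrak{T}(2,n,m) \subseteq \mathfrak{T}(2,n,m+1) \subseteq \mathfrak{T}(2,n,*)$, and the inclusion $S(f,\mathfrak{T}(2,n,m+1)) \subseteq S(f,\mathfrak{T}(2,n,*))$ is then immediate, since a function witnessing essentiality of a point in the smaller class also lies in the larger one.

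For the reverse inclusion I would take $x \in S(f,\mathfrak{T}(2,n,*))$, which by Theorem~\ref{th:S_f} lies in $\textup{Vert}(P(f)) \cup D(f)$, and produce the unique function $g$ differing from $f$ only at $x$, checking $g \in \mathfrak{T}(2,n,m+1)$. If $x \in D(f)$, then $M_1(g) = M_1(f) \cup \{x\}$ and $P(g) = \textup{Conv}(P(f) \cup \{x\})$; adjoining a single point to a convex polygon creates at most one new vertex, so $|\textup{Vert}(P(g))| \leq m+1$, and the defining property of $D(f)$ forces $P(g) \cap M_0(g) = \emptyset$, so arguing as in the first assertion applied to $g$ (and using Lemma~\ref{lem:statem-line} if $P(g)$ is degenerate) we get $g \in \mathfrak{T}(2,n,m+1)$. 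If $x \in \textup{Vert}(P(f))$, then $M_1(g) = M_1(f) \setminus \{x\}$, and rather than controlling $|\textup{Vert}(P(g))|$ I would define $g$ by a system of $m+1$ threshold inequalities: the $m$ edge inequalities of $P(f)$ together with one further half-plane $H$ with $x \notin H$ and $M_1(f)\setminus\{x\} \subseteq H$. Such an $H$ exists by the separating-hyperplane theorem, because $x$, being a vertex of $P(f) = \textup{Conv}(M_1(f))$, does not belong to the compact convex set $\textup{Conv}(M_1(f)\setminus\{x\})$; and the integer solutions of this enlarged system are exactly $(P(f)\cap E_n^2)\cap H = M_1(f)\setminus\{x\} = M_1(g)$. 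In both cases $g$ certifies $x \in S(f,\mathfrak{T}(2,n,m+1))$, so the two sets coincide.

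The step I expect to be the main obstacle is the vertex-removal case: the naive approach of bounding $|\textup{Vert}(P(g))|$ simply fails, since deleting a vertex of a lattice polygon can increase the vertex count by two (for instance, removing the origin from the triangle with vertices $(0,0),(6,2),(2,6)$ leaves a pentagon), so $P(g)$ need not be an $(m+1)$-gon. The resolution is to forget about $P(g)$ and instead write down an explicit $(m+1)$-inequality description of $M_1(g)$, keeping the $m$ constraints of $P(f)$ and adding one that slices off $x$; verifying that this system has precisely the intended integer solutions is then routine. Everything else follows from the inclusion $P(f) \subseteq Q$ and from Theorem~\ref{th:S_f}.
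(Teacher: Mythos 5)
Your proposal is correct and follows essentially the same route as the paper: represent $f$ by the $|\textup{Vert}(P(f))|$ edge inequalities of $P(f)$ (with Lemma~\ref{lem:statem-line} for the segment case), certify essentiality of a vertex $x$ by appending one extra inequality that cuts off $x$, certify essentiality of $x\in D(f)$ via the function with $M_1 = \textup{Conv}(P(f)\cup\{x\})\cap E_n^2$, and conclude with Theorem~\ref{th:S_f}. Your explicit separating-half-plane justification and the pentagon counterexample showing why one cannot simply bound $|\textup{Vert}(P(g))|$ in the vertex-removal case are details the paper leaves implicit, but they do not change the argument.
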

\begin{proof}
Lemma \ref{lem:statem-line} shows that functions $f$ with $|\Vertic(P(f))| = 2$ are $2$-threshold.
Let $|\Vertic(P(f))| > 2$.
The polygon $P(f)$ is a solution of a system of $|\textup{Vert}(P(f))|$ inequalities, and therefore $f$ is a $|\textup{Vert}(P(f))|$-threshold.
For any $x \in \Vertic(P(f))$ we can add one inequality to the system (\ref{eq:khalfspace1}) to get a function $f' \in \mathfrak{T}(2,n,|\textup{Vert}(P(f))| + 1)$ such that $M_1(f') = M_1(f) \setminus \{x\}$, hence $\textup{Vert}(P(f)) \subseteq S(f, \mathfrak{T}(2, n, |\textup{Vert}(P(f))| + 1)$.

Now, consider an arbitrary point $x \in D(f)$ and a function $f' \in \mathfrak{T}(2,n,*)$ with $M_1(f) = \textup{Conv}(P(f) \cup \{x\}) \cap E_n^2$. 
Obviously, $|\textup{Vert}(P(f'))| \leq |\textup{Vert}(P(f))| + 1$ and $f'$ is a $(|\textup{Vert}(P(f))| + 1)$-threshold function. 
The functions $f$ and $f'$ differ in the unique point $x$ and belong to the classes of $|\textup{Vert}(P(f'))|$-threshold and $(|\textup{Vert}(P(f))|+1)$-threshold functions, respectively. 
Therefore $x \in S(f, \mathfrak{T}(2, n, |\textup{Vert}(P(f))| + 1))$ and $D(f) \subseteq S(f, \mathfrak{T}(2, n, |\textup{Vert}(P(f))| + 1))$. 
According to Theorem \ref{th:S_f} the sets of essential points of $f$ with respect to $\mathfrak{T} (2,n,*)$ and with respect to $\mathfrak{T} (2,n, |\textup{Vert}(P(f))| + 1)$ coincide.
\end{proof}

\begin{example}
Consider a function $f \in \mathfrak{T}(2,n,4)$ with $ M_1(f)=\{(1,1),(1,2),(2,1)\}$ (see Fig. \ref{fig:fm}). 
We have $\textup{Vert}(P(f))= \{(1,1), (1,2), (2,1)\}$ and $f$ is a $3$-threshold function. 
Further, $\Delta P(f) \cap E_4^2 = \{(0,0),(1,0),(2,0),(3,0),(0,1),(3,1),(0,2),(2,2),(3,2), (0,3),(1,3)\}$, and hence $S(f,\mathfrak{T}(2,n,*)) = E_4^2 \setminus \{(3,2),(2,3),(3,3)\}$ = $S(f, \mathfrak{T}(2,n,4))$.
\end{example}

\begin{figure}[p]
    \centering
    \includegraphics[width=0.48\textwidth]{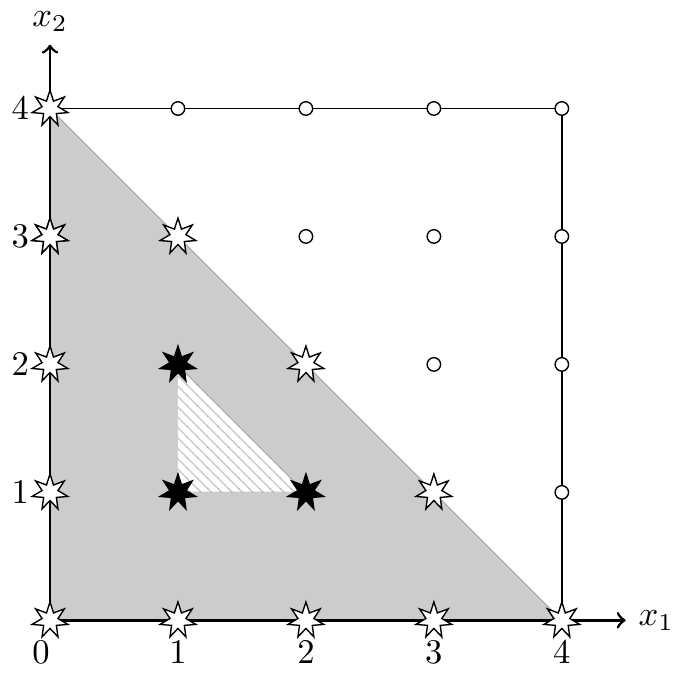}
    \caption{The gray shape is $\Delta P(f)$, the stripped area is $P(f)$.}
    \label{fig:fm}
\end{figure}

\subsection{The teaching set of functions from $\mathfrak{T} (2,n,2)$ with a unique defining set of threshold functions}

In this section we consider the subset of $2$-threshold functions over $E_n^2$, for which the cardinality of minimal teaching set can be bounded by a constant.
Also we show that for such $2$-threshold functions the number of minimal teaching sets can grow as $\Omega(n^2)$. 

Let $f \in \mathfrak{T}(2,n)$ and let $a_0, a_1, a_2$ be real numbers which are not all zero. 
We call the line $a_1x_1 + a_2x_2 = a_0$ an  \emph{$i$-separation line} (or just \emph{separation line}) of $f$ if there exists $i \in \{0,1\}$ such that
$$
x=(x_1,x_2) \in M_i(f) \Longleftrightarrow a_1x_1 + a_2x_2 \leq a_0.
$$
For example, the equality corresponding to a threshold inequality of $f$ defines a 1-separation line of $f$.
Let us prove some properties of separation lines of threshold functions.

It is known \cite{Zolotykh2} that $|S(g)| \in \{ 3, 4 \}$ and $|S_1(g)|,|S_0(g)| \in \{ 1, 2 \}$ for any $g \in \mathfrak{T}(2,n)$ and the $1$-valued essential points of $g$ are adjacent vertices of $P(g)$.

\begin{proposition}\label{prop:sepline1}
Let $f \in \mathfrak{T}(2,n)$. For any $i \in \{0,1\}$ there exists an $i$-separation line of $f$ which contains all points of $S_i(f)$.
\end{proposition}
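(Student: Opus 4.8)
The plan is to treat $i=1$ and $i=0$ separately, in both cases leaning on the structure recalled from \cite{Zolotykh2}: $|S_i(f)| \in \{1,2\}$, and the points of $S_1(f)$ are one or two \emph{adjacent} vertices of $P(f)$.

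\textbf{The case $i=1$.} Since each point of $S_1(f)$ is a vertex of $P(f)$, it lies in $M_1(f)$ and hence in the closed halfplane of \emph{every} threshold inequality of $f$; so it is enough to produce one threshold inequality of $f$ whose bounding line $\ell$ passes through all of $S_1(f)$, as such an $\ell$ is by definition a $1$-separation line. If $S_1(f)=\{v\}$, I would take a halfplane $H'$ with $M_1(f)\setminus\{v\}=H'\cap E_n^2$ (it exists, $v$ being essential) and sweep its bounding line back toward $v$ — translating it parallel to itself until it first meets an integer point, pivoting about that point if it is a $0$-point, and continuing — until it reaches $v$; the halfplane so obtained still contains all of $M_1(f)$ and excludes all of $M_0(f)$, so its bounding line is the required $1$-separation line through $v$. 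If $S_1(f)=\{u,v\}$ with $u,v$ adjacent, they span an edge $e$ of $P(f)$ contained in a line $\ell:\langle a_e,x\rangle=c$ with $M_1(f)\subseteq\{\langle a_e,x\rangle\le c\}$, and the claim is that this $\ell$ already works, i.e. that no $0$-point satisfies $\langle a_e,x\rangle\le c$. Assuming some $w\in M_0(f)$ did, I would bring in halfplanes $H_u=\{\langle a_u,x\rangle\le t_u\}$ and $H_v=\{\langle a_v,x\rangle\le t_v\}$ witnessing essentiality of $u$ and of $v$; then $a_u$ lies in the interior of the normal cone of $P(f)$ at $u$ and $a_v$ in that at $v$, so $a_e$ lies on the ray shared by these two adjacent cones. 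From $w\notin H_u$, $w\notin H_v$ one gets $\langle a_u,w\rangle>t_u\ge\langle a_u,v\rangle$ and $\langle a_v,w\rangle>t_v\ge\langle a_v,u\rangle$; comparing these with $\langle a_e,u\rangle=\langle a_e,v\rangle=c$ and with the angular position of $a_e$ relative to $a_u,a_v$ forces $\langle a_e,w\rangle>c$ — a contradiction — except in the degenerate configuration where $w$ lies strictly below $u$ in direction $a_u$ and strictly below $v$ in direction $a_v$ simultaneously.

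\textbf{The crux.} That degenerate configuration is the main obstacle. In it, $w$ is ``wedged'' between the vertex $u$ (resp. $v$) and the rest of $M_1(f)$ in direction $a_u$ (resp. $a_v$), and I would rule it out using that $e$ is a genuine edge of $P(f)$ — no $1$-point lies strictly beyond $\ell$ — together with a lattice-area argument in the spirit of Proposition~\ref{prop:deltaP} (Pick's formula applied to triangles such as $\textup{Conv}(e\cup\{w\})$), showing that such a $w$ would either introduce a new $1$-point into $\textup{Conv}(M_1(f)\cup\{w\})$ or make it impossible to cut off exactly one of $u,v$ by a single halfplane, contradicting its essentiality.

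\textbf{The case $i=0$.} I would argue symmetrically, using the dual facts from \cite{Zolotykh2} and the characterization of a $0$-essential point $w$ as one for which $M_1(f)\cup\{w\}$ is again a threshold set. If $S_0(f)=\{w\}$, let $g$ be the threshold function with $M_1(g)=M_1(f)\cup\{w\}$; then $w$ is a vertex of $P(g)$, a defining halfplane $H$ of $g$ can be chosen with $w\in\partial H$, and after an infinitesimal perturbation putting $M_1(f)$ in its open interior, $\partial H$ is a $0$-separation line of $f$ through $w$. If $S_0(f)=\{w_1,w_2\}$, I would push a line strictly separating $\textup{Conv}(M_1(f))$ from $\textup{Conv}(M_0(f))$ toward $M_0(f)$ until it first meets $M_0(f)$, and show — by the same wedging/lattice argument as in the crux — that it must then pass through both $w_1$ and $w_2$, since any point of $S_0(f)$ it missed would fail to be essential. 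Thus the single genuinely hard point, both for $i=1$ and for $i=0$, is excluding a point of the opposite value from the closed separating side of the candidate line through the two essential points, which is exactly where the lattice geometry of $P(f)$ near the essential edge is needed.
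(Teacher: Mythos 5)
Your proposal identifies the right target line in each case (the lattice line through the one or two essential points of the given value), but it never actually proves the one step on which everything hinges, and you say so yourself: the ``crux'' of excluding a point $w$ of the opposite value from the closed separating side of the candidate line. In the two-point case for $i=1$ your normal-cone computation correctly isolates the bad configuration (where $\langle a_u,w\rangle$ lies strictly between $t_u$ and $\langle a_u,u\rangle$, and symmetrically for $v$), but the promised resolution --- ``a lattice-area argument in the spirit of Proposition~\ref{prop:deltaP}\ldots showing that such a $w$ would either introduce a new $1$-point\ldots or make it impossible to cut off exactly one of $u,v$'' --- is a disjunction of two unproved assertions; neither is derived from Pick's formula or from anything else you write down. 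The same unproved step is then invoked again for $|S_0(f)|=2$, and even the single-point $i=0$ case rests on an unjustified claim of the same kind (that a defining halfplane of $g$ with $M_1(g)=M_1(f)\cup\{w\}$ ``can be chosen with $w\in\partial H$'' is itself a separation-line-through-$w$ statement). So every branch of your argument reduces to the wedge exclusion, and the wedge exclusion is never carried out: that is a genuine gap, not a routine detail.

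The paper sidesteps this obstacle entirely, which is worth internalizing. For a single essential point $x$ it takes a separation line $l$ of $f$ and a separation line $l'$ of the witness $g$ (differing from $f$ only at $x$), both free of integer points; the symmetric difference of the two halfplanes contains $x$ as its only integer point, so either $l$ and $l'$ are parallel and the parallel line through $x$ separates, or they meet in a point $y$ and the line through $x$ and $y$ separates. For two essential points it uses the two witnesses $g_x,g_y$ and observes that $M_0(g_x)\cap M_0(g_y)=M_0(f)$ and $M_1(g_x)\cap M_1(g_y)=M_1(f)\setminus l'$ are separated by the line $l'$ through $x$ and $y$. No normal cones and no lattice-area estimates are needed: the separation property of the line through the essential points is extracted directly from the essentiality witnesses rather than from the local geometry of $P(f)$. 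If you want to keep your route you must supply the wedge exclusion in full; the cleanest repair is to replace it by this two-witness argument.
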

\begin{proof}
Clearly, it is enough to prove the proposition for $i=1$. 
Denote by $l$ some $1$-separation line of $f$ which does not contain integer points and let $x \in S_1(f)$.
There exists a function $g \in \mathfrak{T}(2,n)$ such that $x$ distinguishes $f$ from $g$, that is $f(y)=g(y)$ for all $y \in E_n^2 \setminus \{x\}$ and $g(x) = 0$.
Denote by $l'$ some $1$-separation line for $g$ which also does not contain integer points.
If $l$ and $l'$ are parallel lines then $x$ lies between them. 
In this case we can pass through $x$ a parallel to $l$ and $l'$ straight line $l''$ which will be a $1$-separate line of $f$.
If $l$ and $l'$ intersect in some point $y$, then the straight line $l''$ which intersects $x$ and $y$ is a $1$-separation line of $f$.
Thus, for any essential point there exists a separation line which intersects $x$ and does not contain any other integer points.
This proves the proposition for $|S_1(f)| = 1$.

Now let $|S_1(f)| = 2$ and $S_1(f) = \{x,y\}$.
There exist functions $g_x,g_y \in \mathfrak{T}(2,n)$ such that $f(z) = g_j(z)$ for all $z \in E_n^2 \setminus \{j\}$ and $g_j(j)=0$, where $j \in \{x,y\}$.
Denote by $l_j$ a $1$-separation line for $g_j$ which does not contain integer points except point $j$.
By construction, sets $M_0(g_x) \cap M_0(g_y)$ and $M_1(g_x) \cap M_1(g_y)$ are separated by the straight line $l'$ containing $x$ and $y$. 
Since $M_0(g_x) \cap M_0(g_y) = M_0(f)$ and $M_1(g_x) \cap M_1(g_y)  = M_1(f) \setminus l'$, the line $l'$ is a $1$-separation line. 
\end{proof}

\begin{proposition}\label{prop:sepline2}
Let $f \in \mathfrak{T}(2,n)$ and $l$ is an $i$-separation line for $f$ for some $i \in \{0,1\}$. 
Then $\textup{Vert}(\textup{Conv}(l \cap E_n^2)) \subseteq S_i(f)$.
\end{proposition}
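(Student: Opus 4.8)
The plan is to prove the inclusion for $i=1$; the case $i=0$ is completely symmetric, the only change being the direction in which the perturbed line is pushed (and there one uses that for a $0$-separation line every point of $M_1(f)$ lies at positive distance from $l$). So fix a $1$-separation line $l\colon a_1x_1+a_2x_2=a_0$ of $f$, so that $M_1(f)=\{x\in E_n^2: a_1x_1+a_2x_2\le a_0\}$ and $M_0(f)=\{x\in E_n^2: a_1x_1+a_2x_2>a_0\}$; in particular every point of $M_0(f)$ lies at positive distance from $l$. If $l\cap E_n^2=\emptyset$ there is nothing to prove, so assume otherwise; then $l\cap E_n^2\subseteq M_1(f)$, the set $\textup{Conv}(l\cap E_n^2)$ is a point or a segment, and it has at most two vertices. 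Let $v$ be such a vertex. Since $f(v)=1$, it suffices to exhibit a function $g\in\mathfrak{T}(2,n)$ that differs from $f$ in the single point $v$: such a $g$ witnesses $v\in S_1(f)$.

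I would construct $g$ by tilting $l$ near $v$ so that $v$ crosses into the open half-plane $a_1x_1+a_2x_2>a_0$ while every other integer point keeps its side. If $l$ contains exactly one integer point, namely $v$, it is enough to translate $l$ towards the half-plane $a_1x_1+a_2x_2<a_0$ by an amount smaller than $\min\{a_0-(a_1x_1+a_2x_2): x\in M_1(f)\setminus\{v\}\}>0$; the translated line defines the required $g$. If $l$ contains at least two integer points, let $\vec u$ be a primitive integer direction vector of $l$; since $v$ is an endpoint of the segment $\textup{Conv}(l\cap E_n^2)$, we may choose the sign of $\vec u$ so that $v-\vec u\notin E_n^2$, and then $l\cap E_n^2=\{v,v+\vec u,\dots,v+m\vec u\}$ for some $m\ge1$. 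Rotate $l$ about the non-integer point $p=v+\tfrac12\vec u$ by a small angle, in the direction moving $v$ into the half-plane $a_1x_1+a_2x_2>a_0$; then $v+\vec u,\dots,v+m\vec u$, lying on the opposite side of $p$, move into the half-plane $a_1x_1+a_2x_2<a_0$. Taking the angle small enough and generic ensures both that every integer point of $E_n^2$ off $l$ stays on its original side and that the rotated line $l'$ passes through no integer point. Define $g$ by declaring $M_1(g)$ to be the set of integer points on the $M_1$-side of $l'$; then $g\in\mathfrak{T}(2,n)$ and $M_1(g)=M_1(f)\setminus\{v\}$, so $g$ differs from $f$ only at $v$.

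The step that needs the most care is the ``small enough, generic angle'' claim. Since $E_n^2$ is finite and each of its points not on $l$ is at positive distance from $l$, a rotation about $p$ through a sufficiently small angle moves no such point across $l'$; and since each lattice point lies on the rotating line for only finitely many angles, a generic small angle leaves $l'$ free of integer points. Given this, the remaining verifications are routine: $l'$ being integer-point-free means its closed and open $M_1$-sides contain the same points of $E_n^2$, so $M_1(g)$ is indeed a threshold set; and by construction $v$ is on the $M_0$-side of $l'$, the points $v+\vec u,\dots,v+m\vec u$ are on the $M_1$-side, all points of $M_1(f)\setminus(l\cap E_n^2)$ are on the $M_1$-side, and no point of $M_0(f)$ crossed over, so $M_1(g)=M_1(f)\setminus\{v\}$, and hence $v\in S_1(f)$, as required.
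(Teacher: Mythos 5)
Your proposal is correct and follows essentially the same strategy as the paper: perturb the separation line so that the vertex $v$ is peeled off into $M_0$ while every other integer point keeps its classification. The only cosmetic difference is that you rotate about the non-integer midpoint $v+\tfrac12\vec u$ and construct the witness function $g$ directly, whereas the paper rotates about the vertex itself to reduce to the case of a line through a single integer point; your version is, if anything, slightly more careful about why the small generic rotation exists.
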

\begin{proof}
Assume without loss of generality that $i=1$ and $l$ is a $1$-separation line.
If $l \cap E_n^2 = \emptyset$, then the proposition is obvious.
Suppose $l \cap E_n^2 = \{x\}$, that is $l$ intersects $E_n^2$ in exactly one point $x$.
It is easy to see that $l$ is also a $0$-separation line for a function $g \in \mathfrak{T}(2,n)$ which coincides with $f$ on $E_n^2 \setminus \{x\}$ and $g(x)=0$, therefore $x$ is an essential point for $f$.
Since $l$ is a $1$-separation line for $f$ and $x \in l$, we conclude that $x \in S_1(f)$.

Now suppose that $|l \cap E_n^2| > 1$ and $\textup{Vert}(\textup{Conv}(l \cap E_n^2)) = \{x, y\}$.
We can turn $l$ around $x$ on a small angle (to not intersect any other integer points) in such a direction that $y$ would be on the same halfspace from the line as other points of $M_1(f)$. 
New line $l'$ will be $1$-separation line for $f$ containing exactly one integer point $x$, and, as we showed above, $x \in S_1(f)$.
The same arguments are true for $y$, that is $y \in S_1(f)$.
\end{proof}
\begin{proposition}\label{prop:sepline3}
Let $f \in \mathfrak{T}(2,n)$.
The sets $S_0(f)$ and $S_1(f)$ belong to the parallel separation lines and there is no integer points between the lines.
\end{proposition}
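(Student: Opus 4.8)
The plan is to exhibit explicitly a pair of parallel separation lines, one carrying $S_1(f)$ and one carrying $S_0(f)$, and then to check that the open strip between them contains no lattice point. Throughout I use the structural facts recalled above (in particular $|S_\nu(f)|\in\{1,2\}$), so $f$ is non-constant and $S_0(f),S_1(f)\neq\emptyset$; by the symmetry $\nu\leftrightarrow 1-\nu$ we may assume $|S_1(f)|=2$, say $S_1(f)=\{w_1,w_2\}$.

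First I would fix the direction. By Proposition~\ref{prop:sepline1} the line $L_1$ through $w_1,w_2$ is a $1$-separation line; write it as $a_1x_1+a_2x_2=c$ with $M_1(f)=\{x\in E_n^2:a_1x_1+a_2x_2\le c\}$, so that $c=\max\{a_1x_1+a_2x_2:x\in M_1(f)\}$, the maximum being attained at $w_1,w_2$. Put $c'=\min\{a_1x_1+a_2x_2:x\in M_0(f)\}$ and let $L_0$ be the parallel line $a_1x_1+a_2x_2=c'$. Since $M_0(f)=E_n^2\setminus M_1(f)=\{x\in E_n^2:a_1x_1+a_2x_2>c\}$, we get $c<c'$ and $M_0(f)=\{x\in E_n^2:a_1x_1+a_2x_2\ge c'\}$; hence $L_0$ is a $0$-separation line parallel to $L_1$, and no lattice point $y$ can satisfy $c<a_1y_1+a_2y_2<c'$ (it would lie in neither $M_0(f)$ nor $M_1(f)$), so there is no lattice point strictly between $L_1$ and $L_0$.

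It then remains to prove $S_0(f)\subseteq L_0$. By Proposition~\ref{prop:sepline2}, $\textup{Vert}(\textup{Conv}(L_0\cap E_n^2))\subseteq S_0(f)$. If $|L_0\cap E_n^2|\ge 2$ this is essentially immediate: the two endpoints of the segment $\textup{Conv}(L_0\cap E_n^2)$ are essential $0$-points, and since by Proposition~\ref{prop:sepline1} some $0$-separation line $L$ contains all of $S_0(f)$, that line must pass through these two distinct points of $L_0$, whence $L=L_0$ and $S_0(f)\subseteq L_0$. The genuine difficulty is the remaining case $L_0\cap E_n^2=\{u\}$, where I would show $S_0(f)=\{u\}$: assuming $u'\in S_0(f)$ with $u'\neq u$ (so $a_1u_1'+a_2u_2'>c'$), the function $g'$ obtained from $f$ by switching the value at $u'$ is threshold, hence $M_1(f)\cup\{u'\}=\textup{Conv}(M_1(f)\cup\{u'\})\cap E_n^2$, while $u\in M_0(f)\setminus\{u'\}$ forces $u\notin\textup{Conv}(M_1(f)\cup\{u'\})$; a lattice-geometric argument — a Pick-type computation on $\textup{Conv}(\{u',w_1,w_2\})$, together with Proposition~\ref{prop:sepline2} applied to the $0$-separation line through $u$ and $u'$, which pins $w_1,w_2$ to the extreme level of the corresponding form on $M_1(f)$ — then yields a contradiction. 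Granting this, $S_1(f)\subseteq L_1$, $S_0(f)\subseteq L_0$, $L_1\parallel L_0$, and the emptiness of the open strip between them give the statement.

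The step I expect to be hardest is precisely this last case: it asserts that an essential $0$-point cannot protrude from $P(f)$ in a direction transversal to the one pinned down by $S_1(f)$, and making it rigorous seems to require combining Proposition~\ref{prop:sepline2} for both a $1$- and a $0$-separation line with a short lattice argument in the spirit of the proof of Proposition~\ref{prop:sepline1}; a clean alternative would be a direct case analysis over the possibilities $(|S_1(f)|,|S_0(f)|)\in\{(1,2),(2,1),(2,2)\}$ dictated by $|S(f)|\in\{3,4\}$.
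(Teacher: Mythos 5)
Your setup is sound and, up to the final case, follows essentially the same route as the paper: the line $L_1$ through $S_1(f)$ given by Proposition~\ref{prop:sepline1}, the nearest parallel line $L_0$ meeting $M_0(f)$, the observation that the open strip between them is lattice-free, and Proposition~\ref{prop:sepline2} to place $S_0(f)$ on $L_0$ when $|L_0\cap E_n^2|\ge 2$. The genuine gap is exactly where you flag it: the case $S_0(f)=\{u,u'\}$ with $u'\notin L_0$ is only ``granted'', and the sketch you offer for it (a Pick computation on the single triangle $\textup{Conv}(\{u',w_1,w_2\})$ combined with Proposition~\ref{prop:sepline2} applied to the line through $u$ and $u'$) does not close it; in particular, that application of Proposition~\ref{prop:sepline2} only returns points of $S_0(f)$ and cannot ``pin $w_1,w_2$'' to anything.

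The missing idea is a comparison of \emph{two} triangles. Since $u$ and $u'$ are both essential $0$-points, flipping either one yields a threshold function, and the $1$-set of a threshold function equals the intersection of its convex hull with $E_n^2$; hence each of $T=\textup{Conv}(\{u,w_1,w_2\})$ and $T'=\textup{Conv}(\{u',w_1,w_2\})$ contains no lattice points besides its apex and the lattice points of the common base $[w_1,w_2]$ (any other lattice point of $T$ belongs to $M_1(f)$, so has level $a_1x_1+a_2x_2\le c$, while $T\subseteq\{a_1x_1+a_2x_2\ge c\}$, forcing it onto $L_1\cap T=[w_1,w_2]$). Pick's formula then gives $\mathcal{S}(T)=\mathcal{S}(T')=(b-1)/2$, where $b=\left|[w_1,w_2]\cap\mathbb{Z}^2\right|$, so $u$ and $u'$ have the same height over the common base and therefore the same level $a_1x_1+a_2x_2$, contradicting $u\in L_0$ and $u'\notin L_0$. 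This two-triangle argument is precisely what the paper does; it handles the whole case $|S_0(f)|=2$ at once, without splitting on $|L_0\cap E_n^2|$.
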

\begin{proof}
Assume without loss of generality that $|S_1(f)| = 2$ and $S_1(f) = \{x,y\}$. 
By proposition \ref{prop:sepline1} the line $l$ containing $S_1(f)$ is a $1$-separation line for $f$.
We can make a translation of $l$ in direction to $M_0(f)$ to the closest line $l'$ which intersects at least one point from $M_0(f)$.
If $|S_0(f)| = 1$ and $S_0(f) = \{z\}$, then $z \in l'$ and the proposition holds.
Let $|S_0(f)| = 2$ and $S_0(f) = \{z, u\}$.
Note that triangles $\triangle xyz$ and $\triangle xyu$ contain no other integer points, except the vertices and the points on the segment $xy$.
By the Pick's formula both triangles have the same area.
It means that both $z$ and $u$ are at the same distance from $l$ and lie on the line $l'$. 
\end{proof}

\begin{theorem}
\label{theorem:statem-9}
Let $f \in \mathfrak{T}(2, n, 2)$ and $M_1(f) \cap B(\textup{Conv}(E_n^2)) \neq \emptyset$, and let some set of threshold functions 
$\{f_1,f_2 \}$ defining f satisfies the following system:
\begin{equation}
\label{eq:S_M_empty}
\begin{cases}
	S(f_1) \cap M_0(f_2) = \emptyset; \\
	S(f_2) \cap M_0(f_1) = \emptyset.
\end{cases}
\end{equation}
Then $\{f_1, f_2\}$ is a unique defining set of $f$ and
$$
	\sigma(f, \mathfrak{T}(2, n, 2)) \leq 9.
$$
\end{theorem}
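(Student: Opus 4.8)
The plan is to exhibit a teaching set of the form $T=S(f_1)\cup S(f_2)\cup\{z\}$ for a single well-chosen point $z$, and to recover the uniqueness of the defining set as a by-product of the verification that $T$ is teaching.

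First the preliminaries. The hypothesis gives $S_1(f_i)\subseteq M_1(f_{3-i})$, hence $S_1(f_i)\subseteq M_1(f)$, and $S_0(f_i)\subseteq M_1(f_{3-i})$; consequently Propositions \ref{prop:statem1} and \ref{prop:statem2} apply and show that every point of $S(f_1)\cup S(f_2)$ is essential for $f$ with respect to $\mathfrak{T}(2,n,2)$, so any teaching set contains $S(f_1)\cup S(f_2)$. Since $f_1,f_2$ are threshold functions, $|S(f_i)|\le 4$, the two $1$-valued essential points of $f_i$ are vertices of $P(f_i)$, and -- being extreme points of $M_1(f_i)$ lying in $M_1(f)$ -- they are vertices of $P(f)$. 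By Propositions \ref{prop:sepline1}--\ref{prop:sepline3}, $S_1(f_i)$ lies on a $1$-separation line $\ell_i$ of $f_i$ that supports $P(f)$ along an edge $e_i$, and $S_0(f_i)$ lies on the adjacent parallel lattice line on the zero side of $\ell_i$. In particular $|S(f_1)\cup S(f_2)|\le 8$, so it suffices to adjoin one point.

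The core statement is: if $h=h_1\wedge h_2\in\mathfrak{T}(2,n,2)$ agrees with $f$ on $T$, then $\{h_1,h_2\}=\{f_1,f_2\}$ (hence $h=f$). Since $S_1(f_1)\cup S_1(f_2)\subseteq M_1(f)$, agreement forces $h_1=h_2=1$ on those vertices, while $S_0(f_i)\subseteq M_0(f)$ forces, for each $x\in S_0(f_i)$, at least one of $h_1(x),h_2(x)$ to be $0$. I would then show that, after relabelling $h_1,h_2$ if necessary, $h_1\equiv 0$ on all of $S_0(f_1)$ and $h_2\equiv 0$ on all of $S_0(f_2)$; then $h_i$ agrees with the threshold function $f_i$ on $S(f_i)$, and since the essential points of a threshold function form a teaching set, $h_i=f_i$. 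For the relabelling, note that $M_0(h_1)$ is a half-plane that must contain every point assigned value $0$ by $h_1$ while excluding all vertices in $S_1(f_1)\cup S_1(f_2)$; because $S_0(f_1)$ and $S_0(f_2)$ sit in the two exterior regions of $P(f)$ cut off by $\ell_1$ and $\ell_2$, no half-plane can contain a point of $S_0(f_1)$ and a point of $S_0(f_2)$ without also catching one of those vertices. This forces the desired labelling except in borderline configurations -- when $e_1,e_2$ are adjacent edges of $P(f)$ (so the two exterior regions share a boundary ray) or when some $S_1(f_i)$ is a single vertex. These are exactly the cases removed by $z$: take $z$ to be a lattice point of $M_0(f)$ on the boundary of $\textup{Conv}(E_n^2)$ just outside $P(f)$ near the relevant edge or common vertex (such a $z$ exists thanks to the hypothesis $M_1(f)\cap B(\textup{Conv}(E_n^2))\neq\emptyset$), and use $h(z)=f(z)=0$ to decide which of $h_1,h_2$ vanishes there. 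Uniqueness of the defining set then follows at once: any defining set $\{g_1,g_2\}$ of $f$ gives $g_1\wedge g_2$ agreeing with $f$ on $T$, so $\{g_1,g_2\}=\{f_1,f_2\}$ by the core statement.

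The main obstacle is the last step of the core argument: the case analysis that decides which half-plane, $M_0(h_1)$ or $M_0(h_2)$, is responsible for which exterior region of $P(f)$, together with the degenerate configurations (adjacent active edges, or $|S_1(f_i)|=1$). Producing one point $z$ that resolves all of these uniformly, and checking that the boundary hypothesis always makes such a $z$ available, is the delicate part of the proof.
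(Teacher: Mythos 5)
Your skeleton is the same as the paper's: Propositions \ref{prop:statem1} and \ref{prop:statem2} force $S(f_1)\cup S(f_2)$ (at most $8$ points) into every teaching set, the only danger is the ``crossed'' pairing in which a competitor $h_1\wedge h_2$ assigns $u_1\in S_0(f_1)$ and $v_2\in S_0(f_2)$ to $M_0(h_1)$ and the other two $0$-valued essential points to $M_0(h_2)$, and one extra point is adjoined to kill that configuration. But two of your key assertions fail. First, the claim that ``no half-plane can contain a point of $S_0(f_1)$ and a point of $S_0(f_2)$ without also catching one of those vertices'' is false precisely in the main case $|S_0(f_1)|=|S_0(f_2)|=2$: the four points $u_1,u_2,v_1,v_2$ are the vertices of a convex quadrilateral whose interior contains $S_1(f_1)\cup S_1(f_2)$, and a half-plane bounded by a line supporting the edge $u_1v_2$ from outside contains $u_1$ and $v_2$ and none of the $1$-valued essential points. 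So the crossed configuration is not a ``borderline'' degeneracy tied to adjacency of $e_1,e_2$ or to $|S_1(f_i)|=1$; it is the generic obstruction, and everything rests on the extra point.

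Second, and fatally, your extra point $z$ is taken in $M_0(f)$, and no point of $M_0(f)$ can do the job. All integer points strictly between the two $0$-separation lines and strictly between the lines $u_1v_2$ and $u_2v_1$ lie in $M_1(f)$, so the interior of the quadrilateral is contained in $M_1(f)$; consequently the crossed competitor $g^*$ obtained by taking the two separating lines just inside the edges $u_1v_2$ and $u_2v_1$ satisfies $M_1(g^*)\subseteq \textup{Int}$ of that quadrilateral $\subseteq M_1(f)$, hence $M_0(f)\subseteq M_0(g^*)$: $g^*$ agrees with $f$ at \emph{every} $0$-valued point of $f$, while $g^*\neq f$ because the hypothesis puts a point of $M_1(f)$ on $B(\textup{Conv}(E_n^2))$, which the quadrilateral's interior cannot reach. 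So $S(f_1)\cup S(f_2)\cup\{z\}$ with $z\in M_0(f)$ is not a teaching set. (Your existence claim for $z$ is also a non sequitur: the hypothesis supplies a $1$-valued boundary point, not a $0$-valued one.) The paper's proof chooses the extra point the other way around: $x'\in M_1(f)\cap B(\textup{Conv}(E_n^2))$, which exists by hypothesis, and shows that every crossed competitor vanishes at $x'$ because $M_1(f)\cap M_1(g)$ is trapped in the interior of the quadrilateral, which misses the boundary of the cube. Replacing your $z$ by such an $x'$, and replacing your half-plane claim by that trapping argument, is what is needed to close the gap; the rest of your argument (recovering $h_i=f_i$ from agreement on $S(f_i)$ in the uncrossed labelling, and deducing uniqueness of the defining set) is sound and matches the paper.
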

\begin{proof}
Note that
$$
	B(\textup{Conv}(E_n^2)) = \{x \in E_n^2: x_1 = 0 \vee  x_2 = 0 \vee x_1 = n-1 \vee x_2 = n-1\}.
$$
We consider two cases depending on the cardinalities of $S_0(f_1)$, $S_0(f_2)$.

Let $|S_0(f_i)| = 1$ for some $i \in \{1,2\}$. Assume, without loss of generality, that $|S_0(f_1)| = 1$, that is $S_0(f_1)=\{ u \}$. Then $|S_1(f_1)| =2$ and $S_1(f_1) = \{ v_1,v_2 \}$.
Consider an arbitrary function $f' \in \mathfrak{T}(2,n,2)$ which agrees with $f$ on $S(f_1) \cup S(f_2)$ and some of its defining set of threshold functions $F' = \{f_1', f_2'\}$.
From the first equation of the system (\ref{eq:S_M_empty}) it follows that $f_1(x) = f(x) = f'(x)$ for every $x \in S(f_1)$.
Hence one of the functions from $F'$, say $f_1'$, should take 
the value $0$ on $u$ and the value $1$ on $v_1$ and $v_2$, therefore
\begin{equation}\label{eq:f1}
	f_1' = f_1.
\end{equation}

From the second equation of the system (\ref{eq:S_M_empty}) we have $f_2(x) = f(x) = f'(x)$ for every $x \in S(f_2)$. 
This together with (\ref{eq:f1}) imply that 
$f_2'$ agrees with $f_2$ on $S(f_2)$, and therefore
$f_2' = f_2$.
We showed that $\{f_1, f_2\}=F'$, and hence $f'$ coincides with $f$ and $\{f_1, f_2\}$ is a unique defining set for $f$.
Moreover, $S(f_1) \cup S(f_2)$ is a teaching set of $f$ and $|S(f_1) \cup S(f_2)| \leq 7$. 

Now suppose that $|S_0(f_1)|=|S_0(f_2)|=2$, that is $S_0(f_1)=\{u_1, u_2\}$, $S_0(f_2)=\{v_1, v_2\}$.
Denote by $G \subseteq \mathfrak{T}(2, n, 2)$
a set of 2-threshold functions, which agree with $f$ on $S(f_1) \cup S(f_2)$.
From the conditions of the theorem it follows
that $S_0(f_i) \in M_1(f_j)$ for $i \neq j$.
Note that $S_0(f_1) \cup S_0(f_2)$ is a set of vertices of a convex quadrilateral $P=(u_1, u_2, v_1, v_2)$, 
and for each of the threshold functions $\{f_1,f_2\}$ vertices from its teaching set are neighboring (see Fig. \ref{fig:p9}). 
This implies that $G$ is the union of two sets:
$$
	G_1 = \{ g \, | \, g \in G, \exists g_1,g_2 \in \mathfrak{T}(2,n): g = g_1 \land g_2, \{ u_1,u_2 \} \subseteq M_0(g_1),
    \text{ and } \{ v_1,v_2 \} \subseteq
    M_0(g_2)\}
$$
and
$$
	G_2 = \{ g \, | \, g \in G, \exists g_1,g_2 \in \mathfrak{T}(2,n): g = g_1 \land g_2, \{ u_1,v_2 \} 
    \subseteq M_0(g_1), \text{ and } \{ u_2,v_1 \} \subseteq M_0(g_2)\}.
$$
Applying the same arguments as in the previous case where $|S_0(f_i)| = 1$ it can be shown that $G_1 = \{ f \}$.
Now, to prove the uniqueness of a defining set of $f$ it is sufficient to demonstrate that $f \notin G_2$.
To this end, we first show that
\begin{equation}\label{eq:Int_P}
M_1(f) \cap \bigcup_{g \in G_2}{M_1(g)} \subseteq \textup{Int}(P).
\end{equation}
By Proposition \ref{prop:sepline1} the line $l$ containing $u_1,u_2$ and the line $l'$ containing $v_1,v_2$ are a $0$-separation lines of $f$, and hence all points of $M_1(f)$ lie between $l$ and $l'$.
Note that for every threshold function $h$ the sets $\textup{Conv}(M_1(h))$ and $\textup{Conv}(M_0(h))$ do not intersect.
These facts imply that $M_1(f) \cap M_1(g)$ should lie between $l$ and $l'$ and between the segments $u_1v_2$ and $u_2v_1$, which proves (\ref{eq:Int_P}). 
Now it follows from (\ref{eq:Int_P}) and the condition of the theorem that $f \notin G_2$ and $\{f_1,f_2\}$ is a unique defining set of $f$.

Finally, we are interested in a point $x' \in E_n^2$ which would distinguish $f$ from every function in $G_2$, that is $f(x') \neq g(x') \text{ for all } g \in G_2$.
By (\ref{eq:Int_P}) we have  $g(x)=0$ for all $g \in G_2$ and $x \in M_1(f) \setminus \textup{Int}(P)$.
Since $B(\textup{Conv}(E_n^2)) \cap \textup{Int}(P) = \emptyset \neq B(\textup{Conv}(E_n^2)) \cap M_1(f)$, we can take an arbitrary point from $B(\textup{Conv}(E_n^2)) \cap M_1(f)$ as $x'$ and obtain a teaching set $T$ for $f$ which is equal to $S(f_1) \cup S(f_2) \cup \{ x' \}$.
Note that any such a teaching set $T$ is minimal and $|T| \leq 9$.

\begin{figure}[p]
    \centering
    \includegraphics[width=0.48\textwidth]{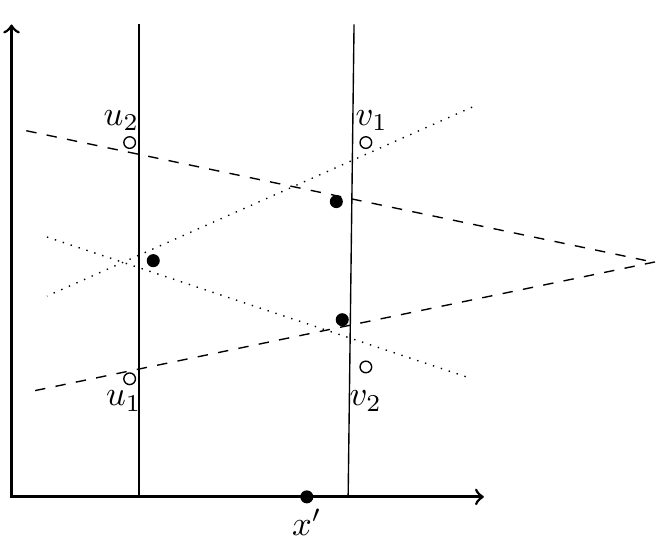}
    \caption{The dashed and dotted lines correspond to pair of different functions from $G_2$, and the solid lines correspond to the function $f$.}
    \label{fig:p9}
\end{figure}

%
%
%
%
%

\end{proof}

\begin{remark}
\label{rem:rem1}
	Theorem \ref{theorem:statem-9} also holds when the domain 
    is a convex subset of $E_n^2$.
\end{remark}

\begin{corollary}
\label{cor:cor-9}
Let $f \in \mathfrak{T}(2, n, 2)$ and there is a unique set of threshold functions 
$\{f_1,f_2 \}$ defining $f$. If $M_1(f) \cap B(\textup{Conv}(E_n^2)) \neq \emptyset$,
then
$$
	\sigma(f, \mathfrak{T}(2, n, 2)) \leq 9.
$$
\end{corollary}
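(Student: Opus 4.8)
The plan is to obtain the corollary as an immediate consequence of Theorem \ref{theorem:statem-9} combined with Proposition \ref{prop:statem3}. The only gap between those two statements is that Theorem \ref{theorem:statem-9} assumes the defining pair $\{f_1,f_2\}$ satisfies the system (\ref{eq:S_M_empty}), whereas in the corollary we are told only that the defining pair is unique. So the task reduces to showing that uniqueness of the defining set forces (\ref{eq:S_M_empty}).

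First I would invoke Proposition \ref{prop:statem3} with $\mathcal{C} = \mathfrak{T}(2,n)$ and $k = 2$: since $f$ has a unique defining set $\{f_1,f_2\}$, that proposition yields $S(f_1,\mathfrak{T}(2,n)) \subseteq M_1(f_2)$ and $S(f_2,\mathfrak{T}(2,n)) \subseteq M_1(f_1)$. Because $M_0(f_j)$ and $M_1(f_j)$ partition $E_n^2$, these inclusions can be rewritten in complementary form as $S(f_1)\cap M_0(f_2) = \emptyset$ and $S(f_2)\cap M_0(f_1) = \emptyset$, which is precisely the system (\ref{eq:S_M_empty}).

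Now $\{f_1,f_2\}$ is a set of threshold functions defining $f$ that satisfies (\ref{eq:S_M_empty}); moreover $f \in \mathfrak{T}(2,n,2)$ and, by hypothesis, $M_1(f)\cap B(\textup{Conv}(E_n^2)) \neq \emptyset$. Hence all the hypotheses of Theorem \ref{theorem:statem-9} are met, and applying that theorem gives $\sigma(f,\mathfrak{T}(2,n,2)) \leq 9$, as required. Note that we use only the cardinality conclusion of Theorem \ref{theorem:statem-9}, not its uniqueness conclusion, so there is no circularity.

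Since every step is a direct citation, there is essentially no obstacle here; the one point to watch is the bookkeeping of complements, i.e. verifying that ``$S(f_i) \subseteq M_1(f_j)$'' really is the same condition as ``$S(f_i) \cap M_0(f_j) = \emptyset$''. Alternatively one could give a self-contained contrapositive argument — from a violation of (\ref{eq:S_M_empty}) build a second defining set by perturbing one factor at an essential point lying in $M_0$ of the other factor — but routing through Proposition \ref{prop:statem3} is shorter and sidesteps the need to separately exclude the degenerate case in which the perturbed function happens to equal the other factor.
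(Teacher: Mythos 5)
Your proposal is correct and follows exactly the paper's own route: apply Proposition \ref{prop:statem3} with $\mathcal{C}=\mathfrak{T}(2,n)$ and $k=2$ to deduce the system (\ref{eq:S_M_empty}) from uniqueness of the defining pair, then invoke Theorem \ref{theorem:statem-9}. The extra care you take with the complementation step and the remark on non-circularity are fine but not needed beyond what the paper states.
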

\begin{proof}
By Proposition \ref{prop:statem3}, for $f_1$ and $f_2$ the following is true:
$$
\begin{cases}
	S(f_1) \cap M_0(f_2) = \emptyset; \\
	S(f_2) \cap M_0(f_1) = \emptyset.
\end{cases}
$$
Therefore $f$ satisfies the conditions of Theorem \ref{theorem:statem-9}.
\end{proof}

Recall that $J(f, C)$ denotes the number of minimal teaching
sets of a function $f$ with respect to a class $C$. Using the set of functions $G_2$ from Theorem \ref{theorem:statem-9} the next lemma proves that number of minimal teaching sets of 2-threshold functions can grow as $\Omega(n^2)$.

\begin{lemma}
$$
	\max_{f \in \mathfrak{T}(2, n, 2)}J(f, \mathfrak{T}(2, n, 2)) = \Omega(n^2).
$$
\end{lemma}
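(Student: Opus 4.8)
The plan is to exhibit a single function $f=f_1\wedge f_2\in\mathfrak{T}(2,n,2)$ with $\Omega(n^2)$ minimal teaching sets, by re-using the analysis of the case $|S_0(f_1)|=|S_0(f_2)|=2$ inside the proof of Theorem~\ref{theorem:statem-9}. I would take $f_1$ to be the threshold function whose $0$-region is a fixed small triangle clipped from the corner $(0,0)$ — concretely $M_1(f_1)=\{x\in E_n^2:2x_1+3x_2\ge 7\}$, for which one checks $S_0(f_1)=\{(3,0),(0,2)\}$ and $S_1(f_1)=\{(2,1)\}$ — and let $f_2$ be its mirror image under $x_1\mapsto n-1-x_1$, so its $0$-region is clipped from the corner $(n-1,0)$ and $S_0(f_2)=\{(n-4,0),(n-1,2)\}$, $S_1(f_2)=\{(n-3,1)\}$. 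Then $M_1(f)=M_1(f_1)\cap M_1(f_2)$ is the cube with those two small corners removed, so $|M_1(f)|=n^2-O(1)$, $M_1(f)\cap B(\textup{Conv}(E_n^2))\neq\emptyset$, and (its $1$-set being non-convex) $f$ is genuinely $2$-threshold.

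First I would verify the hypotheses of Theorem~\ref{theorem:statem-9}: that $\{f_1,f_2\}$ satisfies system~(\ref{eq:S_M_empty}) — a finite check that, for $n$ large, no point of $S(f_1)=\{(3,0),(0,2),(2,1)\}$ lies in the corner $M_0(f_2)$ near $(n-1,0)$ and symmetrically — so that $\{f_1,f_2\}$ is the unique defining set of $f$ and, by Corollary~\ref{cor:col1}, $S(f_1)\cup S(f_2)\subseteq S(f,\mathfrak{T}(2,n,2))$, in particular every point of $S(f_1)\cup S(f_2)$ is essential for $f$. Next I would locate the quadrilateral $P=\textup{Conv}(S_0(f_1)\cup S_0(f_2))=\textup{Conv}\{(0,2),(3,0),(n-4,0),(n-1,2)\}$: its edges lie on the lines $x_2=0$, $x_2=2$ and on the two clipping lines, so $\textup{Int}(P)\cap E_n^2=\{(k,1):2\le k\le n-3\}$, a set of $\Theta(n)$ points, and $S_1(f_1)\cup S_1(f_2)=\{(2,1),(n-3,1)\}\subseteq\textup{Int}(P)$. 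Then I would check $G_2\neq\emptyset$ by exhibiting a member: the function $g$ with $M_1(g)=\{(k,1):2\le k\le n-3\}$ is $2$-threshold by Lemma~\ref{lem:statem-line}, agrees with $f$ on $S(f_1)\cup S(f_2)$, and is different from $f$; since the proof of Theorem~\ref{theorem:statem-9} shows that in this case the set $G$ of $2$-threshold functions agreeing with $f$ on $S(f_1)\cup S(f_2)$ equals $\{f\}\cup G_2$, we get $g\in G_2$.

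Now the counting. By~(\ref{eq:Int_P}) every $h\in G_2$ satisfies $M_1(h)\cap M_1(f)\subseteq\textup{Int}(P)$; hence any point $x'\in M_1(f)\setminus\textup{Int}(P)$ satisfies $h(x')=0\neq 1=f(x')$ for every $h\in G_2$, and so $T_{x'}:=\bigl(S(f_1)\cup S(f_2)\bigr)\cup\{x'\}$ is a teaching set of $f$ (a function agreeing with $f$ on $T_{x'}$ lies in $G$ and agrees with $f$ at $x'$, hence equals $f$). This $T_{x'}$ is minimal: removing $x'$ leaves $S(f_1)\cup S(f_2)$, which is not teaching since $g\in G_2$ agrees with $f$ on it; removing any $p\in S(f_1)\cup S(f_2)$ leaves a set on which the threshold-perturbation of $f$ witnessing $p\in S(f)$ still agrees with $f$. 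Distinct $x'$ give distinct $T_{x'}$, since $T_{x'}\setminus(S(f_1)\cup S(f_2))=\{x'\}$ and no valid $x'$ already lies in $S(f_1)\cup S(f_2)$ (the latter set has three points in $M_0(f)$ and the remaining two, $(2,1),(n-3,1)$, in $\textup{Int}(P)$, all excluded from the range of $x'$). Finally $|M_1(f)\setminus\textup{Int}(P)|\ge (n^2-O(1))-\Theta(n)=\Omega(n^2)$, so $J(f,\mathfrak{T}(2,n,2))=\Omega(n^2)$, giving the claimed bound.

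The main obstacle is the verification that this $f$ really sits in the good regime: computing $S_0(f_i)$ and $S_1(f_i)$ exactly (so that case~2 of Theorem~\ref{theorem:statem-9} applies with $|S_0(f_i)|=2$), checking system~(\ref{eq:S_M_empty}), and — most delicately — arranging simultaneously that $\textup{Int}(P)$ is small (only $\Theta(n)$ lattice points) yet still large enough that $G_2\neq\emptyset$, since $G_2=\emptyset$ would collapse $J(f)$ to $1$. These are all finite, elementary computations, but choosing clipped corners that are tiny and yet force a nonempty $G_2$, while the mirror symmetry makes~(\ref{eq:S_M_empty}) hold, is where the real work lies; once it is in place, the passage to $\Omega(n^2)$ minimal teaching sets via~(\ref{eq:Int_P}) is immediate.
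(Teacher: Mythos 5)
Your argument is correct, and its architecture is the same as the paper's: fix an explicit $f=f_1\wedge f_2$ falling into the case $|S_0(f_1)|=|S_0(f_2)|=2$ of Theorem~\ref{theorem:statem-9}, note that the only functions agreeing with $f$ on $S(f_1)\cup S(f_2)$ are $f$ and the members of $G_2$, and then produce $\Omega(n^2)$ points $x'$ each of which separates $f$ from all of $G_2$, so that the sets $S(f_1)\cup S(f_2)\cup\{x'\}$ are pairwise distinct minimal teaching sets. The difference lies in the witness function and in which side of $f$ supplies the distinguishing points. The paper's $M_1(f^{(n)})$ is a thin strip between the parallel lines $3x_1+4x_2=25$ and $3x_1+4x_2=12m-1$; there $\textup{Int}(P)$ absorbs almost all of $M_1(f)$, so the quadratically many distinguishing points must be located in $M_0(f)$, inside a triangle $R(n)$ on which every $g\in G_2$ is forced to equal $1$ --- an extra geometric argument beyond~(\ref{eq:Int_P}). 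You instead clip two tiny corners, so that $|M_1(f)|=n^2-O(1)$ while $\textup{Int}(P)\cap E_n^2=\{(k,1):2\le k\le n-3\}$ has only $\Theta(n)$ points, and~(\ref{eq:Int_P}) alone already gives $|M_1(f)\setminus\textup{Int}(P)|=\Omega(n^2)$ distinguishing points; this is a genuine simplification. You are also more explicit than the paper on a point that matters for minimality: you exhibit a concrete member of $G_2$ (the segment function, $2$-threshold by Lemma~\ref{lem:statem-line}), certifying that $S(f_1)\cup S(f_2)$ alone is not teaching. I verified your computations of $S(f_1)$, $S(f_2)$, system~(\ref{eq:S_M_empty}), and the inclusion $M_1(f)\cap M_1(g)\subseteq\textup{Int}(P)$ for your $f$; all hold for $n$ large. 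Two harmless slips: $S(f_1)\cup S(f_2)$ has four (not three) points in $M_0(f)$, and the construction should be restricted to $n$ large enough that the two clipped corners are disjoint and $P$ is a nondegenerate quadrilateral; neither affects the asymptotic conclusion.
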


\begin{proof}
    Let 
    $$
    		m=m(n) = \left\lfloor \frac{n-1}{4} \right\rfloor.
    $$

	For $n \geq 21$ let $f^{(n)} \in \mathfrak{T} (2,n,2)$ be defined by threshold functions $f_1^{(n)}$ and $f_2^{(n)}$ with the corresponding
	inequalities:
	$$
		\begin{cases}
			-3x_1-4x_2 \leq -25, \\
			3x_1+4x_2 \leq 12m - 1.
		\end{cases}
       $$

Note that $l: 3x_1+4x_2 = 12m - 1$ is a $1$-separation line of $f_2^{(n)}$ and by Proposition \ref{prop:sepline2} we have $\textup{Vert}(\textup{Conv}(l \cap E_n^2)) \subseteq S_1(f_2^{(n)})$.
By construction, $l$ is not parallel to $x_2$-axis and contains at least two integer points from $E_n^2$. 
Therefore $\textup{Conv}(l \cap E_n^2)$ is a segment and its vertices are solutions of the following two integer linear programming problems with constraints $n \in \mathbb{Z}$, $n \geq 21$ and $m = \left\lfloor \frac{n-1}{4} \right\rfloor$:
$$
	\begin{cases}
		\max x_1, \\
		3x_1+4x_2 = 12m - 1, \\
		0 \leq x_1 \leq n-1, \\
		0 \leq x_2 \leq n-1, \\
		x_1,x_2 \in \mathbb{Z},
	\end {cases}
	\begin{cases}
		\min x_1, \\
		3x_1+4x_2 = 12m - 1, \\
		0 \leq x_1 \leq n-1, \\
		0 \leq x_2 \leq n-1, \\
		x_1,x_2 \in \mathbb{Z}.
	\end{cases}
$$

It is easy to check that the solutions of the above problems are $(4m-3,2)$ and $(1,3m-1)$, therefore $S_1(f_2^{(n)}) = \{(4m-3,2),(1,3m-1)\}$.

Now, the closest parallel to $l$ line, which contains $0$-values of $f$, is $l': 3x_1 + 4x_2 = 12m$.
By Proposition \ref{prop:sepline3} all points of $S_0(f_2^{(n)})$ are vertices of $\textup{Conv}(l' \cap E_n^2)$, and to find $S_0(f_2^{(n)})$ we can use the same arguments as we did for $S_1(f_2^{(n)})$.
The same is true for $f_1^{(n)}$ and the set $S(f_1^{(n)})$, hence the final conclusion looks as follows:
	$$
	\begin{cases}
		S_0(f_1^{(n)})=\{u_1=(8,0),u_2=(0,6)\}, \\
		S_1(f_1^{(n)})=\{u_3=(7,1),u_4=(3,4)\}, \\
		S_0(f_2^{(n)})=\{v_1=(0,3m),v_2=(4m,0)\}, \\
		S_1(f_2^{(n)})=\{v_3=(4m-3,2),v_4=(1,3m-1)\}.
	\end {cases}
	$$
	Note that $f^{(n)}$ satisfies the conditions of Corollary \ref{cor:cor-9} and Theorem \ref{theorem:statem-9}
	and quadrilateral $P$ from the proof of Theorem \ref{theorem:statem-9} has vertices $u_1,u_2,v_1$, and $v_2$.
	Denote by $G \subseteq \mathfrak{T}(2, n, 2)$ the set of functions such that for 
	every $g \in G$ and for some threshold functions $g_1, g_2$ defining $g$ 
	the following is true:
$$
	S_1(f_1^{(n)}) \cup S_1(f_2^{(n)}) \subseteq M_1(g),
$$
$$
	\{u_1, v_2\} \subset M_0(g_1), 
$$
$$
    \{u_2, v_1\} \subset M_0(g_2).
$$
The set $G$ corresponds to the set $G_2$ from the proof of Theorem \ref{theorem:statem-9},
therefore all functions from $G$ and only them agree with $f^{(n)}$ on $S(f_1^{(n)}) \cup S(f_2^{(n)})$. 
Let us bound from below the number of points $x'$ such that 
\begin{equation}\label{eq:p9_1}
	f^{(n)}(x') \neq g(x') \text{ for all } g \in G.
\end{equation}
Denote by $R(n)$ the triangle with vertices $v_3$, $v_4$ and $(n-1, n-1)$ and by $L(n)$ the segment $v_3v_4$.
It is clear that $R(n) \cap M_1(f^{(n)}) = L(n) \cap E_n^2$.
By construction of the set $G$, the inclusion  $R(n) \cap E_n^2 \subset M_1(g)$ holds for any $g \in G$.
It means that any point from $(R(n) \setminus L(n)) \cap E_n^2$ distinguishes $f^{(n)}$ from any function in $G$. 
Therefore the number of minimal teaching sets for $f^{(n)}$ can be
lower bounded by the cardinality of $(R(n) \setminus L(n)) \cap E_n^2$, which is equal
to $|R(n) \cap E_n^2| - |L(n) \cap E_n^2|$.

The number of integer points in $L(n)$ can be calculated through the $\textup{GCD}$ of the differences between coordinates of $v_3$ and $v_4$:
$$
	|L(n) \cap E_n^2| = 2 +\textup{GCD}((4m-3)-1, (3m-1)-2) = m+1.
$$
The number of integer points in $R(n)$ can be calculated by means of the Pick's formula.
Indeed, since $R(n)$ is a triangle with integer vertices, we have
$$
\mathcal{S}(R(n)) = |\textup{Int}(R(n))| + \frac{|B(R(n))|}{2} - 1
$$
and therefore
$$
|R(n) \cap E_n^2| = |\textup{Int}(R(n))| + |B(R(n))| 
= \mathcal{S}(R(n)) + \frac{|B(R(n))|}{2} + 1.
$$
Now since
$$
|B(R(n))| \geq |L(n) \cap E_n^2| + 1 \geq m+2
$$
and
$$
\mathcal{S}(R(n)) = \frac{\left|(m - 1)(12m - 7n + 6)\right|}{2},
$$
we conclude that
$$
|(R(n) \setminus L(n)) \cap E_n^2| \geq \frac{\left|(m - 1)(12m - 7n + 6)\right|}{2} + 
\frac{m+2}{2} + 1 - (m+1) = \Theta(n^2).
$$
That is the number of minimal teaching sets for the function $f^{(n)}$ grows as $\Omega(n^2)$.

\end{proof}

\begin{figure}[p]
    \centering
    \includegraphics[width=0.8\textwidth]{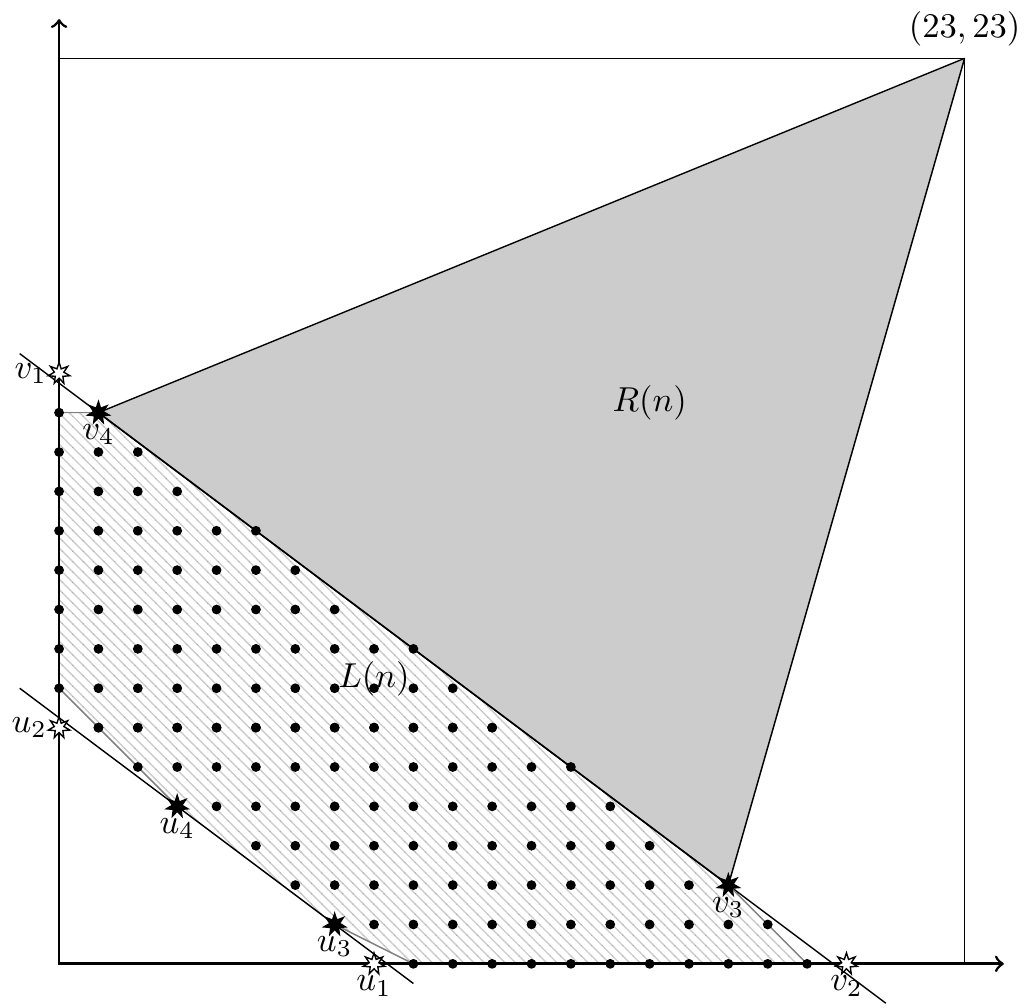}
    \caption{An example of $f^{(24)}$, the black points are the points of $M_1(f)$, the gray shape is $R(n)$, the stripped area is $P(f)$.}
    \label{fig:p9-col}
\end{figure}

\section{Open problems}
In this paper, we investigated structural and quantitative properties of
sets of essential points and minimal teaching sets of $k$-threshold functions.

We proved that a function in the class $\mathfrak{T}(d,n,*)$ has a unique minimal teaching set which is equal to the set of essential points of this function with respect to the class. 
For a function in the class $\mathfrak{T}(2,n,*)$ we estimated the cardinality of the set of essential points of the function. 
It would be interesting to find analogous bounds on the cardinality of the set of essential points of a function in $\mathfrak{T}(d,n,*)$ for $d>2$.

We considered $\mathfrak{T}(2,n,2)$ and proved that the set of essential points of a function in this class is not necessary a minimal teaching set. 
Moreover we showed that $J(\mathfrak{T}(2,n,2))=\Omega(n^2)$. 
Also in the class $\mathfrak{T}(2,n,2)$ we identified functions with minimal teaching sets of cardinality at most $9$. It would be interesting to estimate the proportion of functions with this property in the class $\mathfrak{T}(2,n,2)$.

\section{Acknowledgements}
I thanks the referees for the careful reading of the paper and many helpful sugges-
tions that considerably improved its presentation. 

\vskip5ex
\noindent
\textbf{References}

\end{document}